\def\C{\mathbb C}
\def\Q{\mathbb{C}_{q}[x,y,z]}
\def\X{X_{q}(A_{1})}
\newtheorem{thm}{Theorem}[section]
\newtheorem{lem}[thm]{Lemma}
\newtheorem{defn}[thm]{Definition}
\numberwithin{equation}{section}
\date{}
\begin{document}

\thispagestyle{empty}

\begin{center}
{\bf{\LARGE  Module algebra structures of nonstandard quantum group $X_{q}(A_{1})$  on  $\C_{q}[x,y,z]$} \footnotetext {
Supported by National Natural Science Foundation of China (Grant No.12201187) and  Natural Science Foundation of Henan Province(Grant No. 222300420156)

\small Corresponding author: Dong Su E-mail: sudong@haust.edu.cn }}

\bigbreak

\normalsize Dong Su 

{\footnotesize\small\sl School of Mathematics and Statistics,
Henan University of Science and Technology,
\footnotesize\sl Luoyang 471023, P. R.  China\\
}
\end{center}

\begin{quote}
{\noindent\small{\bf Abstract.}
In this paper,  the module algebra structures of $X_{q}(A_{1})$ on quantum polynomial algebra $\C_{q}[x,y,z]$ are investigated, and  a  complete classification of   $X_{q}(A_{1})$-module algebra structures  on $\C_{q}[x,y,z]$ is given.}\\
{\bf Keywords}: nonstandard quantum group, quantum polynomial algebra, Hopf action,  module algebra, weight.
\\
\noindent {\bf2020 Mathematics Subject Classification}\quad 16T20, 16S40, 17B37, 20G42.

\end{quote}

\section*{Introduction}

The notion of Hopf algebra actions on algebras was introduced by Beattie \cite{BEATTIE1, BEATTIE2} in 1976. 
A duality theorem for Hopf module algebras was studied by Blattner and Montgomery \cite{BM} in 1985. 
It generalized the corresponding theorem of group actions. 
Moreover, the actions of Hopf algebras \cite{MONTGOMERY} and their generalizations (see, e.g.,\cite{DVZ}) play an
important role in quantum group theory \cite{KASSEL, KS} and  in its various applications in physics \cite{CW}.
Duplij and  Sinel'shchikov used a general form of the automorphism of  quantum plane to give the notion of weight for $U_{q}(sl_{2})$-actions considered here, 
and completely classified  quantum group $U_{q}(sl_{2})$-module algebra structures on the quantum plane \cite{DS1,DS2},
that the results are  much richer and consists of $8$ non-isomorphic cases.
Moreover, In \cite{DHL} the authors used the method of weights \cite{DS1,DS2} to classified some actions in terms of action matrices, the module algebra structures of quantum group $U_{q}(sl(m+1))$ on the coordinate algebra of quantum vector spaces were studied,
and the concrete actions of quantum group $U_{q}(sl_{2})$ on $\Q$ were researched (also can see \cite{ZML}).
More relevant research can be found at \cite{CWWZ, HU}.

The  non-standard  quantum groups were studied in \cite{Ge}, 
Ge et al. obtained new solutions of Yang-Baxter equations,
and gave including the twisted extensions quantum group structures  related to these new solutions explicitly.
In \cite{Agha} one class of non-standard quantum deformation corresponding to simple Lie algebra $sl_{n}$ was given, 
which is denoted by $X_q(A_{n-1})$.
For each vertex $i (i=1,\cdots,n)$  of the Dynkin diagram,
the parameter $q_i$  is equal to $q$  or $-q^{-1}$,
if $q_i=q$ for all $i$, then $X_q(A_{n-1})$ is  just to $U_q(sl_n)$.
However, if $q_i\neq q_{i+1}$ for some $1\leq i\leq n$, 
it has the relations $E_{i}^{2}=F_{i}^{2}=0$ in $X_q(A_{n-1})$. 
Such a $X_q(A_{n-1})$ is different to $U_q(sl_n)$.
Jing et al. \cite{N} derived a non-standard  quantum group by employing the FRT constructive method,
and classify all finite dimensional irreducible representations of this non-standard  quantum group.
Cheng and Yang \cite{CY} considered the structures and representations of weak Hopf algebras $\mathfrak{w}X_q(A_{1})$, which is corresponding to non-standard quantum group $X_q(A_{1})$.
We \cite{SY1} researched the representations of a class of  small nonstandard quantum groups $\overline{X}_{q}(A_1)$,
over which the isomorphism classes of all indecomposable modules are classified, and the decomposition formulas
of the tensor product of arbitrary indecomposable modules and simple (or
projective) modules are established. The projective class rings and
Grothendieck rings of $\overline{X}_{q}(A_1)$ are also characterized.
However, the research on module algebra of non-standard quantum groups has not yet yielded any results.
Consequently, based on the research results of module algebra of quantum groups, 
we consider here the module algebra of the nonstandard quantum group $X_{q}(A_{1})$  on the quantum polynomial algebra $\Q$.
and a complete list of  $X_{q}(A_{1})$-module algebra structures  on  $\Q$ is produced and the isomorphism classes of these structures are described.

This paper is organized as follows. 
In Section 1,  we introduce some necessary notations and the concepts.
In Section 2, when $t=0$, we discuss the module algebra structures of $\X$ on the polynomial algebra $\Q$ using the method of weights \cite{DS1,DS2}.
We study the concrete actions of  $\X$ on  $\Q$ and characterize all module algebra structures of $\X$ on  $\Q$.
In Section 3, we study the module algebra structures of $\X$ on  $\Q$ with $t\neq0$.
In the same way of section 2, We study the concrete actions of  $\X$ on  $\Q$ and characterize all module algebra structures of $\X$ on  $\Q$.

\section{Preliminaries}\label{sect-1}
Throughout, we work over the complex field $\mathbb{C}$ unless otherwise stated.
All algebras, Hopf algebras and modules are defined over $\mathbb{C}$;
all maps are $\mathbb{C}$-linear.

Let $(H,m,\eta,\Delta,\varepsilon,S)$ be a Hopf algebra, here $\Delta$ and $\varepsilon$ are the comultiplication and counit of $H$, respectively. Let $A$ be a unital algebra with unit $\bf{1}$. We  will also use the Sweedler notation
$\Delta(h)=\sum\limits_{i}h'_{i}\otimes h''_{i}$ \cite{SW}.

\begin{defn}\label{defn1-1}
By a structure of $H$-module algebra on $A$, we mean a homomorphism $\pi: H\rightarrow \mathrm{End}_{\C}A$ such that:
\begin{enumerate}
  \item for all $h\in H, a,b\in A$, $\pi(h)(ab)=\sum\limits_{i}\pi(h'_{i})(a)\cdot\pi(h''_{i})(b); $
  \item for all $h\in H,  \pi(h)(1)=\varepsilon(h)\bf{1}$.
\end{enumerate}
\end{defn}

The structures $\pi_{1}, \pi_{2}$ are said to be isomorphic if there exists an automorphism $\Psi$ of the algebra $A$ such that $\Psi\pi_{1}(h)\Psi^{-1}=\pi_{2}(h)$ for all $h\in H.$

We assume that $q\in \C^{*}=\C\setminus\{0\}$ is not a root of the unit ($q^{n}\neq 1$ for all non-zero integers $n$).
A class of non-standard quantum algebra $X_q(A_1)$ was studied by Jing etc. \cite{N}.
By definition the algebra $X_q(A_1)$  is a unital associative $\C$-algebra generated by $E, F, K_i, K_{i}^{-1} (i=1,2)$ subject to the relations:
\begin{eqnarray}\label{eqn1-1}
&&K_{i}K_{i}^{-1}=K_{i}^{-1}K_{i}=1,~~K_{1}K_{2}=K_{2}K_{1},\\\label{eqn1-2} 
&&K_{1}E=q^{-1}EK_{1},~~ \\\label{eqn1-3}
&&K_{1}F=qFK_{1},\\\label{eqn1-4}
&&K_{2}E=-q^{-1}EK_{2},~~ \\\label{eqn1-5}
&&K_{2}F=-qFK_{2},\\\label{eqn1-6}
&&EF-FE=\frac{K_{2}K_{1}^{-1}-K_{2}^{-1}K_{1}}{q-q^{-1}},\\ \label{eqn1-7}
&&E^{2}=F^{2}=0.
\end{eqnarray}
The algebra $\X$ is also a Hopf algebra, the comultiplication $\Delta$, counit $\varepsilon$ and antipode $S$ are given as the following
\begin{eqnarray}\label{eqn1-8}
&&\Delta(K_{i})=K_{i}\otimes K_{i}, \\ \label{eqn1-9}
&&\Delta(E)=E\otimes1+K_{2}K_{1}^{-1}\otimes E,\\ \label{eqn1-10}
&&\Delta(F)=1\otimes F+F\otimes  K_{2}^{-1}K_{1}, 
\end{eqnarray}
\begin{eqnarray*}
\varepsilon(K_{i})=1,&\varepsilon(E)=0, &\varepsilon(F)=0,\\
S(K_{i})=K_{i}^{-1}, &S(E)=-K_{1}K_{2}^{-1}E, &S(F)=-FK_{2}K_{1}^{-1}.
\end{eqnarray*}

We consider the quantum polynomial algebra $\C_{q}[x,y,z]$  is a unital algebra, generated by  generators $x,y,z$, and  satisfying the relations
\begin{eqnarray}\label{eqn1-11}
&&yx=qxy,\\\label{eqn1-12}
&&zy=qyz,\\\label{eqn1-13}
&&zx=qxz.
\end{eqnarray}

Denote by $\Q_{s}$ the $s$-th homogeneous component of $\Q$, which is a
linear span of the monomials $x^{m_{1}}y^{m_{2}}z^{m_{3}}$ with $m_{1}+m_{2}+m_{3}=s$.
Also, given a polynomial
$p\in\Q$, denote by $p_{s}$ the $s$-th homogeneous component of $p$, that is the
projection of $p$ onto $\Q_{s}$ parallel to the direct sum of all other homogeneous
components of $\Q$.

By \cite{ALCH,ART1,ART2,ARWI}, one has
a description of automorphisms of the algebra $\Q$, as follows.
Let $\Psi$ be an  automorphism of $\Q$, then there exist nonzero constants  $\alpha,\beta,\gamma\in \mathbb{C}^{*}$ and $t\in \mathbb{C}$, such that
$$\Psi:x\rightarrow \alpha x,~~~y\rightarrow \beta y+txz,~~~z\rightarrow  \gamma z.$$
All such automorphisms form the automorphism group of $\Q$ denoted by $\mathrm{Aut}(\Q)$, one can get $\mathrm{Aut}(\Q)\cong \mathbb{C}\rtimes\left(\mathbb{C}^{*}\right)^{3}$.
In the following sections, we will explore the classificaion of $X_{q}(A_{1})$-module algebra structures on $\C_{q}[x,y,z]$.

\section{When $t=0$, classification of  $X_{q}(A_{1})$-module algebra structures  on $\Q$   }\label{sect-2}

In this section, our aim is to  describe the  $X_{q}(A_{1})$-module algebra structures  on  $\Q$,  with $t=0$, ie. the automorphism of $\Q$ as follows
$$\Psi(x)=\alpha x,~~~\Psi(y)=\beta y,~~~\Psi(z)=\gamma z,~~~(\alpha,\beta,\gamma\in \mathbb{C}^{*}),$$
and $\mathrm{Aut}(\Q)\cong \left(\mathbb{C}^{*}\right)^{3}$,
here $K_{1},K_{2}\in \mathrm{Aut}(\Q)$
\subsection{Properties of  $X_{q}(A_{1})$-module algebras  on $\Q$  }\label{sect-2.1}

By the definition of module algebra, it is easy to see that any action of $\X$ on $\Q$ is determined by the following $4\times 3$ matrix with entries from $\Q$:
\begin{eqnarray}\label{eqn2-1}
M\overset{definition}=\left(
    \begin{array}{cccccc}
      K_{1}(x) & K_{1}(y) & K_{1}(z)\\
      K_{2}(x) & K_{2}(y) & K_{2}(z)\\
      E(x) & E(y) & E(z)\\
      F(x) & F(y) & F(z)
    \end{array}
  \right),
\end{eqnarray}
which is called the full action matrix.
Given a $\X$-module algebra structure on $\Q$, obviously, the action of $K_{1}$(or $K_{2}$) is determined by an automorphism of $\Q$,
in other words, the actions  of $K_{1}$ and $K_{2}$ are determined by a matrix $M_{K_{1}K_{2}}$ as follows
\begin{eqnarray}\label{eqn2-02}
\begin{array}{lllllll}
M_{K_{1}K_{2}}\overset{definition}=&\left(
    \begin{array}{cccccc}
    K_{1}(x) & K_{1}(y) & K_{1}(z)\\
      K_{2}(x) & K_{2}(y) & K_{2}(z)
    \end{array}
  \right)&~&~&~\\
&=\left(
    \begin{array}{cccccc}
   \alpha_{1}(x) & \beta_{1}(y) & \gamma_{1}(z)\\
    \alpha_{2}(x) & \beta_{2}(y) & \gamma_{2}(z)\\
    \end{array}
  \right),
\end{array}
\end{eqnarray}
where $\alpha_{i},\beta_{i}\in \mathbb{C}^{*}$ for $i\in\{1,2\}$.
It is easy to see that every monomial $x^{m_{1}}y^{m_{2}}z^{m_{3}}\in\Q$ is an eigenvector of $K_{1}$(or $K_{2}$),
and the associated eigenvalue $\alpha_{1}^{m_{1}}\beta_{1}^{m_{2}}\gamma_{1}^{m_{3}}$ (or $\alpha_{2}^{m_{1}}\beta_{2}^{m_{2}}\gamma_{2}^{m_{3}}$) is  called the $K_{1}$-weight (or $K_{2}$-weight) of this monomial,
which will be written as
$$wt_{K_{1}}(x^{m_{1}}y^{m_{2}}z^{m_{3}})=\alpha_{1}^{m_{1}}\beta_{1}^{m_{2}}\gamma_{1}^{m_{3}},$$
$$wt_{K_{2}}(x^{m_{1}}y^{m_{2}}z^{m_{3}})=\alpha_{2}^{m_{1}}\beta_{2}^{m_{2}}\gamma_{2}^{m_{3}}.$$

We will also need another matrix  $M_{EF}$ as follows
\begin{eqnarray}
M_{EF}\overset{definition}=\left(
    \begin{array}{cccccc}
      E(x) & E(y) & E(z)\\
      F(x) & F(y) & F(z)
    \end{array}
  \right).
\end{eqnarray}\label{eqn2-3}

Obviously, all entries of $M$ are weight vectors for $K_{1}$ and $K_{2}$,  then
\begin{eqnarray}
\begin{array}{lllllll}
wt_{K_{i}}\left(M\right)&\overset{definition}=\left(
    \begin{array}{cccccc}
    wt_{K_{i}}(K_{1}(x)) &  wt_{K_{i}}(K_{1}(y)) & wt_{K_{i}}(K_{1}(z))\\
      wt_{K_{i}}(K_{2}(x)) & wt_{K_{i}}(K_{2}(y)) & wt_{K_{i}}(K_{2}(z))\\
      wt_{K_{i}}(E(x)) & wt_{K_{i}}(E(y)) & wt_{K_{i}}(E(z))\\
      wt_{K_{i}}(F(x)) & wt_{K_{i}}(F(y)) & wt_{K_{i}}(F(z))
    \end{array}
  \right)&~&~&~\\
~&\bowtie \left(
    \begin{array}{cccccc}
    wt_{K_{i}}(x) &  wt_{K_{i}}(y)& wt_{K_{i}}(z)\\
      wt_{K_{i}}(x) & wt_{K_{i}}(y) & wt_{K_{i}}(z)\\
     (-1)^{i-1}q^{-1}wt_{K_{i}}(x) &  (-1)^{i-1}q^{-1}wt_{K_{i}}(y) & (-1)^{i-1}q^{-1}wt_{K_{i}}(z)\\
     (-1)^{i-1}qwt_{K_{i}}(x) & (-1)^{i-1}qwt_{K_{i}}(y) & (-1)^{i-1}qwt_{K_{i}}(z)
    \end{array}
  \right)&~&~&~\\
~&= \left(
    \begin{array}{cccccc}
    \alpha_{i} &  \beta_{i} & \gamma_{i}\\
    \alpha_{i} &  \beta_{i} & \gamma_{i}\\
     (-1)^{i-1}q^{-1}\alpha_{i} &  (-1)^{i-1}q^{-1}\beta_{i} & (-1)^{i-1}q^{-1}\gamma_{i}\\
      (-1)^{i-1}q\alpha_{i} &  (-1)^{i-1}q\beta_{i} & (-1)^{i-1}q\gamma_{i}
    \end{array}
  \right),
\end{array}
\end{eqnarray}\label{eqn2-4}
where the relation $A=\left(a_{st}\right)\bowtie B=\left(b_{st}\right)$ means that for every pair of indices $s,t$ such that
both $a_{st}$ and $b_{st}$ are nonzero, one has $a_{st}=b_{st}$.

We denote by $\left(M\right)_{j}$ the $j$-th homogeneous component of $M$,
whose elements are just the $j$-th homogeneous components of the corresponding entries of $M$. Set
\begin{eqnarray}\left(M\right)_{0}=\left(
    \begin{array}{cccccc}
      0 &  0 & 0\\
      0 &  0 & 0\\
      a_{0} &  b_{0} & c_{0}\\
     a'_{0} &  b'_{0} & c'_{0}
    \end{array}
  \right)_{0},
\end{eqnarray}\label{eqn2-5}
where, $a_{0}, b_{0},c_{0},a'_{0},  b'_{0}, c'_{0}\in \C$.
Then,  we obtain
\begin{eqnarray}\label{eqn2-07}
\begin{array}{lllllll}
 wt_{K_{1}}\left(\left(M_{EF}\right)_{0}\right)&\bowtie\left(
    \begin{array}{cccccc}
     q^{-1}\alpha_{1} &  q^{-1}\beta_{1} & q^{-1}\gamma_{1}\\
      q\alpha_{1} &  q\beta_{1} & q\gamma_{1}
    \end{array}
  \right)_{0}\bowtie \left(
    \begin{array}{cccccc}
   1 &  1 & 1\\
   1 &  1 & 1
    \end{array}
  \right)_{0},
\end{array}
\end{eqnarray}
\begin{eqnarray}\label{eqn2-08}
\begin{array}{lllllll}
 wt_{K_{2}}\left(\left(M_{EF}\right)_{0}\right)&\bowtie\left(
    \begin{array}{cccccc}
     -q^{-1}\alpha_{2} &  -q^{-1}\beta_{2} & -q^{-1}\gamma_{2}\\
      -q\alpha_{2} &  -q\beta_{2} & -q\gamma_{2}
    \end{array}
  \right)_{0}\bowtie \left(
    \begin{array}{cccccc}
   1 &  1& 1\\
   1 &  1 & 1
    \end{array}
  \right)_{0}.
\end{array}
\end{eqnarray}

According to $q$ is not a root of the unit and relations (\ref{eqn2-07})-(\ref{eqn2-08}), it means that each column of $M_{EF}$ should contain at least one $0$.

An application of $E$ and $F$ to the relations (\ref{eqn1-11})-(\ref{eqn1-13}) by using equation (\ref{eqn2-02}), one has
\begin{eqnarray}\label{eqn2-8}
E(y)x+\beta_{1}^{-1}\beta_{2}yE(x)=qE(x)y+q\alpha_{1}^{-1}\alpha_{2}xE(y), \\ \label{eqn2-9}
E(z)y+\gamma_{1}^{-1}\gamma_{2}zE(y)=qE(y)z+q\beta_{1}^{-1}\beta_{2}yE(z), \\ \label{eqn2-10}
E(z)x+\gamma_{1}^{-1}\gamma_{2}zE(x)=qE(x)z+q\alpha_{1}^{-1}\alpha_{2}xE(z), \\ \label{eqn2-11}
yF(x)+\alpha_{2}^{-1}\alpha_{1}F(y)x=qxF(y)+q\beta_{2}^{-1}\beta_{1}F(x)y, \\\label{eqn2-12}
zF(y)+\beta_{2}^{-1}\beta_{1}F(z)y=qyF(z)+q\gamma_{2}^{-1}\gamma_{1}F(y)z, \\\label{eqn2-13}
zF(x)+\alpha_{2}^{-1}\alpha_{1}F(z)x=qxF(z)+q\gamma_{2}^{-1}\gamma_{1}F(x)z. 
\end{eqnarray}

After projecting equations (\ref{eqn2-8})-(\ref{eqn2-13}) to $\Q_{1}$, we obtain
\begin{eqnarray*}
b_{0}\left(1-q\alpha_{1}^{-1}\alpha_{2}\right)x+a_{0}\left(\beta_{1}^{-1}\beta_{2}-q\right)y=0,\\
c_{0}\left(1-q\beta_{1}^{-1}\beta_{2}\right)y+b_{0}\left(\gamma_{1}^{-1}\gamma_{2}-q\right)z=0,\\
c_{0}\left(1-q\alpha_{1}^{-1}\alpha_{2}\right)x+a_{0}\left(\gamma_{1}^{-1}\gamma_{2}-q\right)z=0,\\
a'_{0}\left(1-q\beta_{1}\beta_{2}^{-1}\right)y+b'_{0}\left(\alpha_{1}\alpha_{2}^{-1}-q\right)x=0,\\
b'_{0}\left(1-q\gamma_{1}\gamma_{2}^{-1}\right)z+c'_{0}\left(\beta_{1}\beta_{2}^{-1}-q\right)y=0,\\
a'_{0}\left(1-q\gamma_{1}\gamma_{2}^{-1}\right)z+c'_{0}\left(\alpha_{1}\alpha_{2}^{-1}-q\right)x=0,
\end{eqnarray*}
which certainly implies
\begin{eqnarray*}
b_{0}\left(1-q\alpha_{1}^{-1}\alpha_{2}\right)=a_{0}\left(\beta_{1}^{-1}\beta_{2}-q\right)=
c_{0}\left(1-q\beta_{1}^{-1}\beta_{2}\right)=b_{0}\left(\gamma_{1}^{-1}\gamma_{2}-q\right)=0,\\
c_{0}\left(1-q\alpha_{1}^{-1}\alpha_{2}\right)=a_{0}\left(\gamma_{1}^{-1}\gamma_{2}-q\right)=
a'_{0}\left(1-q\beta_{1}\beta_{2}^{-1}\right)=b'_{0}\left(\alpha_{1}\alpha_{2}^{-1}-q\right)=0,\\
b'_{0}\left(1-q\gamma_{1}\gamma_{2}^{-1}\right)=c'_{0}\left(\beta_{1}\beta_{2}^{-1}-q\right)=
a'_{0}\left(1-q\gamma_{1}\gamma_{2}^{-1}\right)=c'_{0}\left(\alpha_{1}\alpha_{2}^{-1}-q\right)=0.
\end{eqnarray*}
We will determine the weight constants $\alpha_{i}$, $\beta_{i}$ and $\gamma_{i} (i=1,2)$  as follows:
\begin{eqnarray}
\begin{array}{lllllll}
a_{0}\neq 0&\Rightarrow  &\alpha_{1}=q,    &\alpha_{2}=-q, &\beta_{1}^{-1}\beta_{2}=q, &\gamma_{1}^{-1}\gamma_{2}=q;\\\label{eqn2-11}
b_{0}\neq 0&\Rightarrow  &\beta_{1}=q,     &\beta_{2}=-q, &\alpha_{1}^{-1}\alpha_{2}=q^{-1}, &\gamma_{1}^{-1}\gamma_{2}=q;\\\label{eqn2-11}
c_{0}\neq 0&\Rightarrow  &\gamma_{1}=q,    &\gamma_{2}=-q, &\beta_{1}^{-1}\beta_{2}=q^{-1}, &\alpha_{1}^{-1}\alpha_{2}=q^{-1};\\\label{eqn2-11}
a'_{0}\neq 0&\Rightarrow &\alpha_{1}=q^{-1}, &\alpha_{2}=-q^{-1}, &\beta_{1}\beta_{2}^{-1}=q^{-1}, &\gamma_{1}\gamma_{2}^{-1}=q^{-1};\\
b'_{0}\neq 0&\Rightarrow &\beta_{1}=q^{-1}, &\beta_{2}=-q^{-1}, &\alpha_{1}\alpha_{2}^{-1}=q, & \gamma_{1}\gamma_{2}^{-1}=q^{-1};\\\label{eqn2-11}
c'_{0}\neq 0&\Rightarrow &\gamma_{1}=q^{-1}, &\gamma_{2}=-q^{-1}, &\beta_{1}\beta_{2}^{-1}=q, &\alpha_{1}\alpha_{2}^{-1}=q.\label{eqn2-11}
\end{array}
\end{eqnarray}

Because $q$ is not a root of the unit, $q\neq \pm1$.
Therefore at least one of $a_{0}$, $b_0$, $c_0$ and $a'_{0}$, $b'_0$, $c'_0$ is not zero.
In summary, we have obtained the following results for the $0$-st homogeneous component $\left(M_{EF}\right)_{0}$ of  $M_{EF}$.
\begin{lem}\label{lem2-1}
There are $7$ cases for the $0$-st homogeneous component $\left(M_{EF}\right)_{0}$ of  $M_{EF}$, as follows:
\begin{eqnarray}
&&\left(
    \begin{array}{cccccc}
      a_{0} &  0 & 0\\
      0 &  0 & 0
    \end{array}
  \right)_{0}\Rightarrow\alpha_{1}=q,    \alpha_{2}=-q, \beta_{1}^{-1}\beta_{2}=q, \gamma_{1}^{-1}\gamma_{2}=q;\\
&&\left(
    \begin{array}{cccccc}
      0 &  b_{0}& 0\\
      0 &  0& 0
    \end{array}
  \right)_{0}\Rightarrow\beta_{1}=q,     \beta_{2}=-q, \alpha_{1}^{-1}\alpha_{2}=q^{-1}, \gamma_{1}^{-1}\gamma_{2}=q;\\
&&\left(
    \begin{array}{cccccc}
      0 &  0 & c_{0}\\
      0 &  0 & 0
    \end{array}
  \right)_{0}\Rightarrow\gamma_{1}=q,    \gamma_{2}=-q, \beta_{1}^{-1}\beta_{2}=q^{-1}, \alpha_{1}^{-1}\alpha_{2}=q^{-1};\\
&&\left(
    \begin{array}{cccccc}
      0 &  0 & 0\\
      a'_{0} &  0 & 0
    \end{array}
  \right)_{0}\Rightarrow\alpha_{1}=q^{-1}, \alpha_{2}=-q^{-1}, \beta_{1}\beta_{2}^{-1}=q^{-1}, \gamma_{1}\gamma_{2}^{-1}=q^{-1};\\
&&\left(
    \begin{array}{cccccc}
      0 &  0 & 0\\
      0 &  b'_{0} & 0
    \end{array}
  \right)_{0}\Rightarrow\beta_{1}=q^{-1}, \beta_{2}=-q^{-1}, \alpha_{1}\alpha_{2}^{-1}=q, \gamma_{1}\gamma_{2}^{-1}=q^{-1};\\
&&\left(
    \begin{array}{cccccc}
    0 &  0 & 0\\
      0 &  0 & c'_{0}
    \end{array}
  \right)_{0},\Rightarrow\gamma_{1}=q^{-1}, \gamma_{2}=-q^{-1}, \beta_{1}\beta_{2}^{-1}=q, \alpha_{1}\alpha_{2}^{-1}=q;
\end{eqnarray}
\begin{eqnarray}
&& \left(
    \begin{array}{cccccc}
    0 &  0 & 0\\
      0 &  0 & 0
    \end{array}
  \right)_{0}\mathrm{it~ does ~not~ determine~ the~ weight~ constants~ at ~all.} 
\end{eqnarray}
\end{lem}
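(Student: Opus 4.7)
The plan is to prove that at most one of the six scalars $a_0, b_0, c_0, a'_0, b'_0, c'_0$ is nonzero; the seven cases of the lemma then fall out as six singletons (each with weight data supplied by the implications table immediately preceding the statement) together with the all-zero configuration, in which no weight constraint is imposed.

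First I would observe that at most one entry in each column of $\left(M_{EF}\right)_0$ can be nonzero. The weight computation already carried out gives $wt_{K_1}(E(x)) = q^{-1}\alpha_1$ while $wt_{K_1}(F(x)) = q\alpha_1$, and a nonzero constant has $K_1$-weight $1$; hence $a_0 \neq 0$ forces $\alpha_1 = q$ whereas $a'_0 \neq 0$ forces $\alpha_1 = q^{-1}$, and since $q^2 \neq 1$ these are incompatible. The same argument in the $y$- and $z$-columns rules out $\{b_0, b'_0\}$ both nonzero and $\{c_0, c'_0\}$ both nonzero.

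Second I would show that at most one column of $\left(M_{EF}\right)_0$ supports a nonzero entry, using the system obtained by projecting the commutation relations to $\Q_1$. Take $\{a_0, b_0\}$ as the prototype: the relation $a_0(\beta_1^{-1}\beta_2 - q) = 0$ forces $\beta_1^{-1}\beta_2 = q$, but $b_0 \neq 0$ gives $\beta_1 = q$ and $\beta_2 = -q$, whence $\beta_1^{-1}\beta_2 = -1$, contradicting $q \neq -1$. All other cross-pairs follow the same template: one nonzero constant pins one of the ratios $\alpha_1^{-1}\alpha_2, \beta_1^{-1}\beta_2, \gamma_1^{-1}\gamma_2$ (or its inverse) to $q^{\pm 1}$ via the projected equations, while the second nonzero constant, through an assignment of the form $(q^{\pm 1}, -q^{\pm 1})$ for the corresponding weight parameters, forces that same ratio to equal $-1$, which is impossible since $q \neq \pm 1$.

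The main (and essentially the only) obstacle is the bookkeeping: there are fifteen unordered cross-pairs to rule out in principle, but the manifest symmetry among $x, y, z$ and the duality between the $E$-row and the $F$-row collapse them to a single computational pattern, so after the prototype $\{a_0, b_0\}$ is handled the rest are routine. Once these two steps are in place, the enumeration of possibilities for $\left(M_{EF}\right)_0$ is exactly the seven matrices displayed, with the weight relations reproduced verbatim from the implications table.
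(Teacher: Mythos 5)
Your proposal is correct and follows essentially the same route as the paper: the $K_1$-, $K_2$-weight computation on $\left(M_{EF}\right)_0$ pins $(\alpha_1,\alpha_2)$ (resp. $(\beta_1,\beta_2)$, $(\gamma_1,\gamma_2)$) to $(q^{\pm1},-q^{\pm1})$ for each nonzero constant, the projection of the $E$- and $F$-images of the defining relations to $\Q_1$ supplies the remaining ratio constraints, and $q\neq\pm1$ rules out any two constants being simultaneously nonzero. Your explicit two-step organization (at most one nonzero entry per column, then at most one nonzero column) is a slightly cleaner packaging of the same exclusion argument the paper carries out, and it correctly yields the stated seven cases.
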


Next, for the $1$-st homogeneous component, due to $q$ is not a root of the unit, one has
\begin{eqnarray*}
&&wt_{K_{1}}(E(x)) = q^{-1}\alpha_{1}=q^{-1}wt_{K_{1}}(x)\neq wt_{K_{1}}(x),\\
&&wt_{K_{2}}(E(x)) =-q^{-1}\alpha_{2}=-q^{-1}wt_{K_{2}}(x)\neq wt_{K_{2}}(x),
\end{eqnarray*}
which implies
$$(E(x))_{1}=a_{1}y+a_{2}z,$$
for some  $ a_{1},a_{2}\in \C$,
and in a similar way we have
{\small$$\left(M_{EF}\right)_{1}=\left(\begin{array}{cccccc}
a_{1}y+a_{2}z   &  b_{1}x+b_{2}z    &c_{1}x+c_{2}y\\
a'_{1}y+a'_{2}z &  b'_{1}x+b'_{2}z  &c'_{1}x+c'_{2}y
    \end{array}
  \right)_{1}$$}
where $b_{1},b_{2},c_{1},c_{2},a'_{1},a'_{2},b'_{1},b'_{2},c'_{1},c'_{2}\in \C$.
Therefore
\begin{eqnarray}
\begin{array}{lllllll}
wt_{K_{i}}\left((M_{EF})_{1}\right)&\bowtie \left(
    \begin{array}{cccccc}
   (-1)^{i-1}q^{-1}\alpha_{i} &  (-1)^{i-1}q^{-1}\beta_{i} & (-1)^{i-1}q^{-1}\gamma_{i}\\
      (-1)^{i-1}q\alpha_{i} &  (-1)^{i-1}q\beta_{i} & (-1)^{i-1}q\gamma_{i}
    \end{array}
  \right)&~&~&~\\
~&\bowtie \left(
    \begin{array}{cccccc}
     \beta_{i}~ \mathrm{or}~  \gamma_{i}&   \alpha_{i}~ \mathrm{or}~\gamma_{i} & \alpha_{i}~ \mathrm{or}~  \beta_{i}\\
     \beta_{i}~ \mathrm{or}~  \gamma_{i}&   \alpha_{i}~ \mathrm{or}~\gamma_{i} & \alpha_{i}~ \mathrm{or}~  \beta_{i}
    \end{array}
  \right),
\end{array}
\end{eqnarray}\label{eqn2-4}

Now project (\ref{eqn2-8})-(\ref{eqn2-13}) to $\Q_{2}$ to obtain
\begin{eqnarray*}
&&b_{1}\left(1-q\alpha_{1}^{-1}\alpha_{2}\right)x^{2}+b_{2}\left(1-\alpha_{1}^{-1}\alpha_{2}\right)zx
+a_{1}\left(\beta_{1}^{-1}\beta_{2}-q\right)y^{2}+a_{2}\left(\beta_{1}^{-1}\beta_{2}-q^{2}\right)yz=0,\\
&&c_{1}\left(1-q^{2}\beta_{1}^{-1}\beta_{2}\right)yx+c_{2}\left(1-q\beta_{1}^{-1}\beta_{2}\right)y^{2}
+b_{2}\left(\gamma_{1}^{-1}\gamma_{2}-q\right)z^{2}+b_{1}q\left(\gamma_{1}^{-1}\gamma_{2}-1\right)xz=0,\\
&&c_{1}\left(1-q\alpha_{1}^{-1}\alpha_{2}\right)x^{2}+qc_{2}\left(1-\alpha_{1}^{-1}\alpha_{2}\right)xy
+qa_{1}\left(\gamma_{1}^{-1}\gamma_{2}-1\right)yz+a_{2}\left(\gamma_{1}^{-1}\gamma_{2}-q\right)z^{2}=0,\\
&&a'_{1}\left(1-q\beta_{2}^{-1}\beta_{1}\right)y^{2}+a'_{2}\left(1-q^{2}\beta_{2}^{-1}\beta_{1}\right)yz
+b'_{1}\left(\alpha_{2}^{-1}\alpha_{1}-q\right)x^{2}+qb'_{2}\left(\alpha_{2}^{-1}\alpha_{1}-1\right)xz=0,\\
&&qb'_{1}\left(1-\gamma_{2}^{-1}\gamma_{1}\right)xz+b'_{2}\left(1-q\gamma_{2}^{-1}\gamma_{1}\right)z^{2}
+c'_{1}\left(\beta_{2}^{-1}\beta_{1}-q^{2}\right)xy+c'_{2}\left(\beta_{2}^{-1}\beta_{1}-q\right)y^{2}=0,\\
&&qa'_{1}\left(1-\gamma_{2}^{-1}\gamma_{1}\right)yz+a'_{2}\left(1-q\gamma_{2}^{-1}\gamma_{1}\right)z^{2}
+c'_{1}\left(\alpha_{2}^{-1}\alpha_{1}-q\right)x^{2}+qc'_{2}\left(\alpha_{2}^{-1}\alpha_{1}-1\right)xy=0,
\end{eqnarray*}
which certainly implies
\begin{eqnarray*}
&&b_{1}\left(1-q\alpha_{1}^{-1}\alpha_{2}\right)=b_{2}\left(1-\alpha_{1}^{-1}\alpha_{2}\right)
=a_{1}\left(\beta_{1}^{-1}\beta_{2}-q\right)=a_{2}\left(\beta_{1}^{-1}\beta_{2}-q^{2}\right)=0\\
&&c_{1}\left(1-q^{2}\beta_{1}^{-1}\beta_{2}\right)=c_{2}\left(1-q\beta_{1}^{-1}\beta_{2}\right)
=b_{2}\left(\gamma_{1}^{-1}\gamma_{2}-q\right)=b_{1}q\left(\gamma_{1}^{-1}\gamma_{2}-1\right)=0\\
&&c_{1}\left(1-q\alpha_{1}^{-1}\alpha_{2}\right)=qc_{2}\left(1-\alpha_{1}^{-1}\alpha_{2}\right)
=qa_{1}\left(\gamma_{1}^{-1}\gamma_{2}-1\right)=a_{2}q\left(\gamma_{1}^{-1}\gamma_{2}-q\right)=0\\
&&a'_{1}\left(1-q\beta_{1}\beta_{2}^{-1}\right)=a'_{2}\left(1-q^{2}\beta_{1}\beta_{2}^{-1}\right)
=b'_{1}\left(\alpha_{1}\alpha_{2}^{-1}-q\right)=qb'_{2}\left(\alpha_{1}\alpha_{2}^{-1}-1\right)=0,\\
&&qb'_{1}\left(1-\gamma_{1}\gamma_{2}^{-1}\right)=b'_{2}\left(1-q\gamma_{1}\gamma_{2}^{-1}\right)
=c'_{1}\left(\beta_{1}\beta_{2}^{-1}-q^{2}\right)=c'_{2}\left(\beta_{1}\beta_{2}^{-1}-q\right)=0,\\
&&qa'_{1}\left(1-\gamma_{1}\gamma_{2}^{-1}\right)=a'_{2}\left(1-q\gamma_{1}\gamma_{2}^{-1}\right)
=c'_{1}\left(\alpha_{1}\alpha_{2}^{-1}-q\right)=qc'_{2}\left(\alpha_{1}\alpha_{2}^{-1}-1\right)=0.
\end{eqnarray*}

As a consequence, we have
\begin{eqnarray}
\begin{array}{llllllllll}
&a_{1}\neq 0&\Rightarrow& \beta_{2}\alpha_{1}^{-1}=q,       &\gamma_{2}\gamma_{1}^{-1}=1,\\
&a_{2}\neq 0&\Rightarrow& \beta_{2}\alpha_{1}^{-1}=q^{2},   &\gamma_{2}\gamma_{1}^{-1}=q,\\
&b_{1}\neq 0&\Rightarrow& \alpha_{2}\alpha_{1}^{-1}=q^{-1}, &\gamma_{2}\gamma_{1}^{-1}=1,\\
&b_{2}\neq 0&\Rightarrow& \alpha_{2}\alpha_{1}^{-1}=1,      &\gamma_{2}\gamma_{1}^{-1}=q,\\
&c_{1}\neq 0&\Rightarrow& \beta_{2}\alpha_{1}^{-1}=q^{-2}   &\alpha_{2}\alpha_{1}^{-1}=q^{-1}, \\
&c_{2}\neq 0&\Rightarrow& \beta_{2}\alpha_{1}^{-1}=q^{-1}   &\alpha_{2}\alpha_{1}^{-1}=1,
\end{array}
\end{eqnarray}
\begin{eqnarray}
\begin{array}{llllllllll}
&a'_{1}\neq 0&\Rightarrow& \beta_{2}\alpha_{1}^{-1}=q,       &\gamma_{2}\gamma_{1}^{-1}=1,\\
&a'_{2}\neq 0&\Rightarrow& \beta_{2}\alpha_{1}^{-1}=q^{2},   &\gamma_{2}\gamma_{1}^{-1}=q,\\
&b'_{1}\neq 0&\Rightarrow& \alpha_{2}\alpha_{1}^{-1}=q^{-1}, &\gamma_{2}\gamma_{1}^{-1}=1,\\
&b'_{2}\neq 0&\Rightarrow& \alpha_{2}\alpha_{1}^{-1}=1,      &\gamma_{2}\gamma_{1}^{-1}=q,\\
&c'_{1}\neq 0&\Rightarrow& \beta_{2}\alpha_{1}^{-1}=q^{-2}   &\alpha_{2}\alpha_{1}^{-1}=q^{-1}, \\
&c'_{2}\neq 0&\Rightarrow& \beta_{2}\alpha_{1}^{-1}=q^{-1}   &\alpha_{2}\alpha_{1}^{-1}=1.
\end{array}
\end{eqnarray}

From the above discussion,  for the $1$-st homogeneous component $\left(M_{EF}\right)_{1}$ of  $M_{EF}$, we have following lemma.
\begin{lem}\label{lem2-2}
There are $13$ cases for the $1$-st homogeneous component $\left(M_{EF}\right)_{1}$ of  $M_{EF}$, as follows:
\begin{eqnarray}
&&\left(
    \begin{array}{ccccccccc}
      a_{1}y & 0&0\\
      0      & 0&0
    \end{array}
  \right)_{1} \Rightarrow \beta_{1}=q^{-1}\alpha_{1}, \beta_{2}=-q^{-1}\alpha_{2},\beta_{1}^{-1}\beta_{2}=q, \gamma_{1}^{-1}\gamma_{2}=1;\\
&&\left(
    \begin{array}{ccccccccc}
      a_{2}z & 0&0\\
      0      & 0&0
    \end{array}
  \right)_{1} \Rightarrow \gamma_{1}=q^{-1}\alpha_{1}, \gamma_{2}=-q^{-1}\alpha_{2},\beta_{1}^{-1}\beta_{2}=q^{2}, \gamma_{1}^{-1}\gamma_{2}=q;\\
&&\left(
    \begin{array}{ccccccccc}
      0 &b_{1}x&0\\
      0      & 0&0
    \end{array}
  \right)_{1} \Rightarrow \beta_{1}=q\alpha_{1}, \beta_{2}=-q\alpha_{2},\alpha_{1}^{-1}\alpha_{2}=q^{-1}, \gamma_{1}^{-1}\gamma_{2}=1;\\
&&\left(
    \begin{array}{ccccccccc}
      0 &b_{2}z&0\\
      0      & 0&0
    \end{array}
  \right)_{1}\Rightarrow \gamma_{1}=q^{-1}\beta_{1}, \gamma_{2}=-q^{-1}\beta_{2},\alpha_{1}^{-1}\alpha_{2}=1, \gamma_{1}^{-1}\gamma_{2}=q;\\
&&\left(
    \begin{array}{ccccccccc}
      0 &0&c_{1}x\\
      0 & 0&0
    \end{array}
  \right)_{1} \Rightarrow \alpha_{1}=q^{-1}\gamma_{1}, \alpha_{2}=-q^{-1}\gamma_{2},\alpha_{1}^{-1}\alpha_{2}=q^{-1}, \beta_{1}^{-1}\beta_{2}=q^{-2};\\
&&\left(
    \begin{array}{ccccccccc}
      0 &0&c_{2}y\\
      0      & 0&0
    \end{array}
  \right)_{1}\Rightarrow \beta_{1}=q^{-1}\gamma_{1}, \beta_{2}=-q^{-1}\gamma_{2},\alpha_{1}^{-1}\alpha_{2}=1, \beta_{1}^{-1}\beta_{2}=q^{-1};\\
&&\left(
    \begin{array}{ccccccccc}
       0& 0&0\\
      a'_{1}y      & 0&0
    \end{array}
  \right)_{1} \Rightarrow \beta_{1}=q\alpha_{1}, \beta_{2}=-q\alpha_{2},\beta_{1}^{-1}\beta_{2}=q, \gamma_{1}^{-1}\gamma_{2}=1;\\
&&\left(
    \begin{array}{ccccccccc}
      0 & 0&0\\
      a'_{2}z & 0&0
    \end{array}
  \right)_{1} \Rightarrow \gamma_{1}=q\alpha_{1}, \gamma_{2}=-q\alpha_{2},\beta_{1}^{-1}\beta_{2}=q^{2}, \gamma_{1}^{-1}\gamma_{2}=q;\\
&&\left(
    \begin{array}{ccccccccc}
      0 &0&0\\
      0 &b'_{1}x&0
    \end{array}
  \right)_{1}\Rightarrow \beta_{1}=q^{-1}\alpha_{1}, \beta_{2}=-q^{-1}\alpha_{2},\alpha_{1}^{-1}\alpha_{2}=q^{-1}, \gamma_{1}^{-1}\gamma_{2}=1;
\end{eqnarray}
\begin{eqnarray}
&&\left(
    \begin{array}{ccccccccc}
      0 &0&0\\
      0& b'_{2}z&0
    \end{array}
  \right)_{1} \Rightarrow \gamma_{1}=q\beta_{1}, \gamma_{2}=-q\beta_{2},\alpha_{1}^{-1}\alpha_{2}=1, \gamma_{1}^{-1}\gamma_{2}=q;\\
&&\left(
    \begin{array}{ccccccccc}
      0 &0&0\\
      0 & 0&c'_{1}x
    \end{array}
  \right)_{1} \Rightarrow \alpha_{1}=q\gamma_{1}, \alpha_{2}=-q\gamma_{2},\alpha_{1}^{-1}\alpha_{2}=q^{-1}, \beta_{1}^{-1}\beta_{2}=q^{-2};\\
&&\left(
    \begin{array}{ccccccccc}
      0 &0&0\\
      0 &0&c'_{2}y
    \end{array}
  \right)_{1} \Rightarrow \beta_{1}=q\gamma_{1}, \beta_{2}=-q\gamma_{2},\alpha_{1}^{-1}\alpha_{2}=1, \beta_{1}^{-1}\beta_{2}=q^{-1};\\
&&\left(
    \begin{array}{ccccccccc}
      0 &0&0\\
      0 &0&0
    \end{array}
  \right)_{1} \mathrm{it~ does~ not~ determine~ the~ weight~ constants ~at~ all.}
\end{eqnarray}
\end{lem}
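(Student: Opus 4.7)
The plan proceeds in three stages. First, I would re-derive the general shape of $\left(M_{EF}\right)_1$ displayed just before the lemma: since every entry of $M_{EF}$ is a simultaneous $K_1$- and $K_2$-weight vector with prescribed weights $(-1)^{i-1}q^{\pm 1}\alpha_i,\,(-1)^{i-1}q^{\pm 1}\beta_i,\,(-1)^{i-1}q^{\pm 1}\gamma_i$, and since $q$ is not a root of unity (so $q^{\pm 1}\alpha_i \neq \alpha_i$, and similarly for $\beta_i, \gamma_i$), the monomial $x$ cannot occur in $(E(x))_1$, nor $y$ in $(E(y))_1$, nor $z$ in $(E(z))_1$. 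This forces
\[
(E(x))_1 = a_1 y + a_2 z, \quad (E(y))_1 = b_1 x + b_2 z, \quad (E(z))_1 = c_1 x + c_2 y,
\]
and the analogous expressions with primes for $F$.

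Second, I would apply $E$ and $F$ to the quantum-plane relations $yx=qxy$, $zy=qyz$, $zx=qxz$ via the comultiplication formulas (1.9)--(1.10), producing the six identities (2.8)--(2.13). Projecting each to $\Q_2$ and invoking linear independence of $\{x^2, y^2, z^2, xy, yz, xz\}$ extracts the $24$ scalar equations already displayed in the excerpt. From these one reads off, for each of the twelve coefficients $a_1, a_2, b_1, b_2, c_1, c_2, a'_1, a'_2, b'_1, b'_2, c'_1, c'_2$, a specific pair of weight-ratio equalities that must hold whenever that coefficient is nonzero. Combining each pair with the weight-match constraint between $E(x)$ (resp.\ $F(x)$, etc.) and the surviving monomial $y$ or $z$ then pins down $\beta_1, \beta_2$ or $\gamma_1, \gamma_2$ as the explicit multiples of $\alpha_1, \alpha_2$ claimed in each of the twelve listed cases.

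The third stage is to show that the twelve ``single nonzero entry'' cases are mutually exclusive, i.e.\ at most one of the twelve coefficients can be nonzero at a time; this is where the main obstacle lies. A priori one must inspect all $\binom{12}{2}=66$ pairs, but the constraints are rigid enough that for any such pair the associated weight-ratio requirements disagree on at least one entry, and imposing both simultaneously yields a relation of the form $q^m = \pm 1$ with $m$ a nonzero integer, contradicting the hypothesis on $q$. For instance, $a_1 \neq 0$ together with $a_2 \neq 0$ demands $\beta_1^{-1}\beta_2 = q$ and $\beta_1^{-1}\beta_2 = q^2$, forcing $q=1$; similarly $a_1 \neq 0$ together with $b_1 \neq 0$ demands $\beta_1 = q^{-1}\alpha_1$ and $\beta_1 = q\alpha_1$, forcing $q^2 = 1$. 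In practice the $66$ pairs fall into a few symmetry classes (same-row within the $E$-block, same-row within the $F$-block, mixed $E$-$F$ pairs across matching or distinct columns), and each class is handled by a single such monomial-versus-weight conflict. Once pairwise exclusion is established, adjoining the trivial possibility $\left(M_{EF}\right)_1 = 0$ gives exactly the $13$ cases in the statement.
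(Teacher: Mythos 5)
Your proposal is correct and follows essentially the same route as the paper: determine the shape of $\left(M_{EF}\right)_{1}$ from the $K_{1}$- and $K_{2}$-weight constraints, project the images of the relations (\ref{eqn1-11})--(\ref{eqn1-13}) under $E$ and $F$ onto $\Q_{2}$, and read off the weight-ratio conditions attached to each coefficient. Your explicit third stage (checking that any two of the twelve coefficients impose incompatible conditions of the form $q^{m}=\pm1$) is left implicit in the paper, but it is the correct justification for why exactly $13$ cases survive, and your sample verifications are accurate.
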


\subsection{The structures of  $X_{q}(A_{1})$-module algebra  on $\Q$  }\label{sect-2.2}

In this subsection,  our aim is to  describe the concrete  $X_{q}(A_{1})$-module algebra structures  on $\Q$, 
where $K_{1},K_{2}\in \mathrm{Aut}(\Q)\cong \left(\mathbb{C}^{*}\right)^{3}$.

By Lemma \ref{lem2-1} and \ref{lem2-2}, and $q$ is not a root of the unit, it follows that if both the $0$-th homogeneous component and the $1$-th homogeneous component of $M_{EF}$ are nonzero, it is easy to see that these series are empty. So, we need to consider following possibilities.

\begin{lem}\label{lem2-3} If  the $0$-th homogeneous component of $M_{EF}$ is zero and the $1$-st homogeneous component of $M_{EF}$ is nonzero, then these  series are empty. 
\end{lem}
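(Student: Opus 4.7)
My plan is to use the commutator relation (\ref{eqn1-6}), projected to the degree-$1$ component of $\Q$, to derive a contradiction in each of the twelve nontrivial cases of Lemma \ref{lem2-2}.

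The first step is to establish the following degree-monotonicity property under the hypothesis $(M_{EF})_0=0$: for every $p\in\Q$, the minimum degree of $E(p)$ (respectively $F(p)$) is at least the minimum degree of $p$. I would prove this by induction on the degree $d$ of a monomial. The base case $d=1$ is precisely the assumption that $E$ and $F$ send each generator $x,y,z$ to a polynomial with vanishing degree-$0$ part. The inductive step follows from the comultiplication formulas
\begin{equation*}
E(ab) = E(a)b + K_2K_1^{-1}(a)\,E(b), \qquad F(ab) = aF(b) + F(a)\,K_2^{-1}K_1(b),
\end{equation*}
together with the fact that each monomial in $\Q$ is a $K_1,K_2$-weight vector, so $K_2K_1^{-1}$ and $K_2^{-1}K_1$ act on it by nonzero scalars and thus preserve the degree.

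For each of the twelve nontrivial cases of Lemma \ref{lem2-2} I would pick the generator $g\in\{x,y,z\}$ whose column in $(M_{EF})_1$ carries the unique nonzero entry, apply (\ref{eqn1-6}) to $g$, and compare the degree-$1$ projections of the two sides. On the right-hand side, $\frac{K_2K_1^{-1}(g)-K_2^{-1}K_1(g)}{q-q^{-1}}$ is a scalar multiple of $g$; the weight constraints of Lemma \ref{lem2-2} always force the ratio $wt_{K_2}(g)/wt_{K_1}(g)$ to equal one of $\pm q,\pm q^{-1}$, none of which is $\pm 1$ since $q$ is not a root of unity, so this scalar is nonzero. On the left-hand side, suppose the unique nonzero entry of $(M_{EF})_1$ sits in the $E$-row, so $(E(g))_1=\mu g^*$ for some $g^*\in\{x,y,z\}\setminus\{g\}$ and all other entries of $(M_{EF})_1$ vanish; the $F$-row case is symmetric under $E\leftrightarrow F$. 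Since $(F(g))_0=(F(g))_1=0$, the polynomial $F(g)$ has minimum degree $\geq 2$, whence degree-monotonicity gives $(EF(g))_1=0$. Similarly $FE(g)=\mu F(g^*)+F((E(g))_{\geq 2})$ where $F((E(g))_{\geq 2})$ has minimum degree $\geq 2$ by degree-monotonicity, and $(F(g^*))_0=(F(g^*))_1=0$ because the $g^*$-column of $(M_{EF})_1$ is entirely zero, so $F(g^*)$ has minimum degree $\geq 2$. This yields $(FE(g))_1=0$, contradicting the nonzero right-hand side.

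The main obstacle is bookkeeping rather than conceptual difficulty: one must confirm in each of the twelve cases that the selected weight ratio actually belongs to $\{\pm q,\pm q^{-1}\}$ and hence differs from $\pm 1$, which is immediate from the explicit list of constraints in Lemma \ref{lem2-2}.
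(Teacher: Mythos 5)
Your proposal is correct and follows essentially the same route as the paper: apply the relation $EF-FE=\frac{K_{2}K_{1}^{-1}-K_{2}^{-1}K_{1}}{q-q^{-1}}$ to the generator carrying the unique nonzero entry of $\left(M_{EF}\right)_{1}$, note that the degree-one projection of the left-hand side vanishes while the right-hand side is a nonzero scalar multiple of that generator. Your explicit degree-monotonicity lemma and the case-by-case verification that the weight ratio lies in $\lrb{\pm q,\pm q^{-1}}$ merely make precise what the paper treats in one representative case and dismisses for the rest as ``similar.''
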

\begin{proof} Now, we  show that
 {\small$\left[\left(
    \begin{array}{ccccccccc}
      0 & 0&0\\
      0     &0&0
    \end{array}
  \right)_{0},\left(
    \begin{array}{ccccccccc}
      a_1y & 0&0\\
      0 & 0&0
    \end{array}
  \right)_{1}\right]$}
-series is empty.
If we suppose the contrary, then it follows from
$$EF-FE=\frac{K_{2}K_{1}^{-1}-K_{2}^{-1}K_{1}}{q-q^{-1}}$$
that within this series, one can have
$$\frac{K_{2}K_{1}^{-1}-K_{2}^{-1}K_{1}}{q-q^{-1}}(x)=
\frac{\alpha_{2}\alpha_{1}^{-1}-\alpha_{2}^{-1}\alpha_{1}}{q-q^{-1}}x.$$
By $a_{1}\neq0$, one can get $\beta_{1}=q^{-1}\alpha_{1}, \beta_{2}=-q^{-1}\alpha_{2},$
$\beta_{2}\beta_{1}^{-1}=q$, and $\gamma_{2}\gamma_{1}^{-1}=1$, hence $\alpha_{2}\alpha_{1}^{-1}=-q$, and
$$\frac{K_{2}K_{1}^{-1}-K_{2}^{-1}K_{1}}{q-q^{-1}}(x)=-x.$$
On the other hand, projecting $(EF-FE)(x)$ to $\Q$ we obtain
$$(EF-FE)(x)=E(F(x))-F(E(x))=E(0)-F(a_{1}y)=0,$$
however, $0\neq -x$.
We have obtained contradictions and proved our claims.

In a similar way, one can prove that all other series with the $0$-th homogeneous component of $M_{EF}$ is zero and the $1$-th homogeneous component of $M_{EF}$ is nonzero  are empty. 
\end{proof}

\begin{lem}\label{lem2-4} If  the $0$-th homogeneous component of $M_{EF}$ is nonzero and the $1$-st homogeneous component of $M_{EF}$ is zero, then these  series are empty. 
\end{lem}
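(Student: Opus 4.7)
The strategy is to mirror the proof of Lemma~\ref{lem2-3}: assume toward contradiction that one of the six series indicated by Lemma~\ref{lem2-1} (in which exactly one of $a_0,b_0,c_0,a_0',b_0',c_0'$ is nonzero) is realized with $(M_{EF})_1=0$, and deduce a contradiction from a combination of the commutator relation (\ref{eqn1-6}), the Serre relation $F^{2}=0$ from (\ref{eqn1-7}), and the module-algebra compatibility with the quantum polynomial relations (\ref{eqn1-11})--(\ref{eqn1-13}). As in Lemma~\ref{lem2-3}, it suffices to treat one representative subcase in detail; the other five are handled identically after permuting the roles of $x,y,z$ and of $E,F$.

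I would carry out the subcase $a_{0}\neq 0$. By Lemma~\ref{lem2-1}, $\alpha_{1}=q$, $\alpha_{2}=-q$, $\beta_{2}=q\beta_{1}$, $\gamma_{2}=q\gamma_{1}$. Using the weight constraint (\ref{eqn2-4}) column by column together with the hypothesis that both $(M_{EF})_{0}$ (apart from the slot $E(x)$) and $(M_{EF})_{1}$ vanish, and invoking the fact that $q$ is not a root of unity to rule out weight coincidences, one pins the action on generators down to the form
$$E(x)=a_{0},\quad E(y)=0,\quad E(z)=0,\quad F(x)=\mu x^{2},\quad F(y)=\lambda xy,\quad F(z)=\nu xz$$
for some scalars $\mu,\lambda,\nu\in\C$. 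Substituting these formulas into $EF-FE=\frac{K_{2}K_{1}^{-1}-K_{2}^{-1}K_{1}}{q-q^{-1}}$ applied to $y$ and to $z$, one computes $(EF-FE)(y)=\lambda a_{0}y$ and $(EF-FE)(z)=\nu a_{0}z$, matching $y$ and $z$ on the right-hand side respectively; hence $\lambda a_{0}=\nu a_{0}=1$, and in particular $\lambda\neq 0$. Next, applying $F$ to the identity $yx-qxy=0$ via $\Delta(F)=1\otimes F+F\otimes K_{2}^{-1}K_{1}$ and using the explicit formulas produces the relation $\mu(q^{2}-1)=2q\lambda$, while $F^{2}(y)=0$ gives $\lambda+q^{-1}\mu=0$, i.e.\ $\mu=-q\lambda$. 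Eliminating $\mu$ between the two equations yields $\lambda(q^{2}+1)=0$; since $q$ is not a root of unity, $q^{2}\neq -1$, and so $\lambda=0$, contradicting $\lambda a_{0}=1$.

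The principal obstacle is that, in contrast with Lemma~\ref{lem2-3}, the commutator (\ref{eqn1-6}) by itself is not enough to produce a contradiction in the present lemma: after the weight analysis the commutator equations turn out to be consistent for appropriate choices of scalar parameters. One must therefore combine them with the Serre relation $F^{2}=0$ (or dually $E^{2}=0$) and with the module-algebra compatibility with one of (\ref{eqn1-11})--(\ref{eqn1-13}) in order to obtain two independent relations on the parameters which fail to be simultaneously solvable unless $q^{2}+1=0$. A secondary technical point is that non-generic values of $\beta_{1},\gamma_{1}$ might enlarge some of the weight spaces and introduce additional monomial terms into $E(v)$ or $F(v)$; but each such extra term triggers further compatibility constraints via $F^{2}=0$ and (\ref{eqn1-11})--(\ref{eqn1-13}), and since $q$ is not a root of unity each of these also forces $q^{2}+1=0$, so the argument closes uniformly across all parameter values and all six subcases.
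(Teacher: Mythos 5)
Your proposal is correct and follows essentially the same route as the paper's own proof: it treats the representative subcase $a_{0}\neq 0$, uses the $K_{1},K_{2}$-weight analysis (with $q$ not a root of unity) to pin the action down to $E(x)=a_{0}$, $E(y)=E(z)=0$, $F(x)=\mu x^{2}$, $F(y)=\lambda xy$, $F(z)=\nu xz$, then combines the commutator (\ref{eqn1-6}), the relation $F^{2}=0$, and compatibility with $yx=qxy$ to reach the same contradiction $\lambda(q^{2}+1)=0$ that appears in the paper as $-a_{0}^{-1}(q^{3}+q)x^{2}y\neq 0$. The only minor divergence is your closing remark attributing the elimination of possible higher-degree terms to $F^{2}=0$ and (\ref{eqn1-11})--(\ref{eqn1-13}); in the paper these terms are already killed by the sign clash between the $K_{1}$- and $K_{2}$-eigenvalue conditions, but this does not affect the validity of the argument.
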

\begin{proof}We only  show that
{\small$\left[\left(
    \begin{array}{ccccccccc}
      a_{0} & 0&0\\
      0     &0&0
    \end{array}
  \right)_{0},\left(
    \begin{array}{ccccccccc}
      0 & 0&0\\
      0 & 0&0
    \end{array}
  \right)_{1}\right]$}
-series is empty. in a similar way, one can prove that all other series are empty.

Consider this series, we obtain that
\begin{eqnarray*}
\begin{array}{llllllllll}
&a_0\neq0&\Rightarrow& \alpha_{1}=q, & \alpha_{2}=-q,&\beta_{2}\beta_{1}^{-1}=q, &\gamma_{2}\gamma_{1}^{-1}=q.
\end{array}
\end{eqnarray*}
and  suppose that it is not empty. We set
 \begin{eqnarray*}
\begin{array}{lllllll}
&K_{1}(x)=qx,\; \quad\quad \quad K_{2}(x)=-qx,\; &\\
&K_{1}(y)=\beta_{1}y,\; \quad\quad \quad K_{2}(y)=\beta_{2}y,\; &\\
&K_{1}(z)=\gamma_{1}z,\; \quad\quad \quad K_{2}(z)=\gamma_{2}z,\; &\\
&E(x)=a_{0}+\sum\limits_{m_{1}+n_{1}+l_{1}\geq2}\rho^{1}_{m_{1}n_{1}l_{1}}x^{m_{1}}y^{n_{1}}z^{l_{1}}\; &\mathrm{for}~~{m_{1},n_{1}, l_{1}}\in \mathbb{N},\\
&E(y)=\sum\limits_{m_{2}+n_{2}+l_{2}\geq2}\rho^{2}_{m_{2}n_{2}l_{2}}x^{m_{2}}y^{n_{2}}z^{l_{2}}\;
&\mathrm{for}~~{m_{2},n_{2},l_{2}}\in \mathbb{N},\\
&E(z)=\sum\limits_{m_{3}+n_{3}+l_{3}\geq2}\rho^{3}_{m_{3}n_{3}l_{3}}x^{m_{3}}y^{n_{3}}z^{l_{3}}\;
&\mathrm{for}~~{m_{3},n_{3},l_{3}}\in \mathbb{N},\\
&F(x)=\sum\limits_{m_{4}+n_{4}+l_{4}\geq2}\rho^{4}_{m_{4}n_{4}l_{4}}x^{m_{4}}y^{n_{4}}z^{l_{4}}\;
&\mathrm{for}~~{m_{4},n_{4},l_{4}}\in \mathbb{N},\\
&F(y)=\sum\limits_{m_{5}+n_{5}+l_{5}\geq2}\rho^{5}_{m_{5}n_{5}l_{5}}x^{m_{5}}y^{n_{5}}z^{l_{5}}\;
&\mathrm{for}~~{m_{5},n_{5},l_{5}}\in \mathbb{N},\\
&F(z)=\sum\limits_{m_{6}+n_{6}+l_{6}\geq2}\rho^{6}_{m_{6}n_{4}l_{6}}x^{m_{6}}y^{n_{6}}z^{l_{6}}\;
&\mathrm{for}~~{m_{6},n_{6},l_{6}}\in \mathbb{N},
\end{array}
\end{eqnarray*}
where  $\beta_{1},\beta_{2},\gamma_{1},\gamma_{2}\in \mathbb{C}^{\ast}$, 
and $\rho^{i}_{m_{i}n_{i}l_{i}}\in \C, i=1,2,3,4,5,6$.
We have
{\small\begin{align*}
(K_{1}E-q^{-1}EK_{1})(x)=&K_{1}(E(x))-q^{-1}E(K_{1}(x))\\
=&K_{1}(a_{0}+\sum\limits_{m_{1}+n_{1}+l_{1}\geq2}\rho^{1}_{m_{1}n_{1}l_{1}}x^{m_{1}}y^{n_{1}}z^{l_{1}})-q^{-1}qE(x)\\
=&a_{0}+\sum\limits_{m_{1}+n_{1}+l_{1}\geq2}\rho^{1}_{m_{1}n_{1}l_{1}}\alpha_{1}^{m_{1}}\beta_{1}^{n_{1}}\gamma_{1}^{l_{1}} x^{m_{1}}y^{n_{1}}z^{l_{1}}
-E(x)\\
=&\sum\limits_{m_{1}+n_{1}+l_{1}\geq2}\rho^{1}_{m_{1}n_{1}l_{1}}(q^{m_{1}}\beta_{1}^{n_{1}}\gamma_{1}^{l_{1}}-1) x^{m_{1}}y^{n_{1}}z^{l_{1}}=0,
\end{align*}}
then for all $m_{1},n_{1},l_{1}\in \mathbb{N}$ with $m_{1}+n_{1}+l_{1}\geq2$, one has
$\rho^{1}_{m_{1}n_{1}l_{1}}=0~~\mathrm{or} ~~q^{m_{1}}\beta_{1}^{n_{1}}\gamma_{1}^{l_{1}}=1.$
And
{\small\begin{align*}
(K_{2}E+q^{-1}EK_{2})(x)=&K_{2}(E(x))+q^{-1}E(K_{2}(x))\\
=&K_{2}(a_{0}+\sum\limits_{m_{1}+n_{1}+l_{1}\geq2}\rho^{1}_{m_{1}n_{1}l_{1}}x^{m_{1}}y^{n_{1}}z^{l_{1}})-q^{-1}qE(x)\\
=&a_{0}+\sum\limits_{m_{1}+n_{1}+l_{1}\geq2}\rho^{1}_{m_{1}n_{1}l_{1}}\alpha_{2}^{m_{1}}\beta_{2}^{n_{1}}\gamma_{2}^{l_{1}} x^{m_{1}}y^{n_{1}}z^{l_{1}}
-E(x)\\
=&\sum\limits_{m_{1}+n_{1}+l_{1}\geq2}\rho^{1}_{m_{1}n_{1}l_{1}}((-q)^{m_{1}}\beta_{2}^{n_{1}}\gamma_{2}^{l_{1}}-1) x^{m_{1}}y^{n_{1}}z^{l_{1}}=0,
\end{align*}}
then for all $m_{1},n_{1},l_{1}\in \mathbb{N}$ with $m_{1}+n_{1}+l_{1}\geq2$, one has
$\rho^{1}_{m_{1}n_{1}l_{1}}=0~~\mathrm{or} ~~(-q)^{m_{1}}\beta_{2}^{n_{1}}\gamma_{2}^{l_{1}}=1.$
If some $\rho^{1}_{m_{1}n_{1}l_{1}}\neq0$ meet the conditions, i.e
\begin{eqnarray*}
\left\{\begin{array}{ll}
q^{m_{1}}\beta_{1}^{n_{1}}\gamma_{1}^{l_{1}}=1,   \\
(-q)^{m_{1}}\beta_{2}^{n_{1}}\gamma_{2}^{l_{1}}=1,
\end{array}
\right.
\end{eqnarray*}
one can get
$(-1)^{m_{1}}q^{(n_{1}+l_{1})}=1,$
this contradicts with  $q$ is not a unit root.
Therefore, for all $m_{1},n_{1},l_{1}\in \mathbb{N}$ with $m_{1}+n_{1}+l_{1}\geq2$, we have
$E(x)=a_{0}.$
By discussing $E(y)$, $E(z)$, $F(x)$, $F(y)$ and $F(z)$ using methods similar to $E(x)$, we can obtain that
$$E(y)=0,~~E(z)=0,$$
\begin{eqnarray*}F(x)=
\left\{\begin{array}{ll}
0   \\
\rho^{4}_{200}x^{2}
\end{array}
\right.,\quad\quad
F(y)=
\left\{\begin{array}{ll}
0  \\
\rho^{5}_{110}xy
\end{array}
\right.,\quad\quad
F(z)=
\left\{\begin{array}{ll}
0   \\
\rho^{6}_{101}xz
\end{array}
\right..
\end{eqnarray*}

From $EF-FE=\frac{K_{2}K_{1}^{-1}-K_{2}^{-1}K_{1}}{q-q^{-1}}$, we have
\begin{eqnarray*}
  &&\frac{K_{2}K_{1}^{-1}-K_{2}^{-1}K_{1}}{q-q^{-1}}(y)=\frac{\beta_{2}\beta_{1}^{-1}-\beta_{2}^{-1}\beta_{1}}{q-q^{-1}y}
  =\frac{q-q^{-1}}{q-q^{-1}y}=y, \\
  &&\frac{K_{2}K_{1}^{-1}-K_{2}^{-1}K_{1}}{q-q^{-1}}(z)=\frac{\gamma_{2}\gamma_{1}^{-1}-\gamma_{2}^{-1}\gamma_{1}}{q-q^{-1}y}
  =\frac{q-q^{-1}}{q-q^{-1}z}=z.
\end{eqnarray*}
\begin{itemize}
  \item If $F(y)=0$, then $(EF-FE)(y)=EF(y)-FE(y)=0\neq y;$
  \item if $F(y)=\rho^{5}_{110}xy$, then $(EF-FE)(y)=\rho^{5}_{110}E(xy)=\rho^{5}_{110}a_{0}y=y$, hence $\rho^{5}_{110}=a_{0}^{-1}$ and $F(y)=a_{0}^{-1}xy.$
  \item if $F(z)=0$, then $(EF-FE)(z)=EF(z)-FE(z)=0\neq z;$
  \item if $F(z)=\rho^{6}_{101}xz$, then $(EF-FE)(z)=\rho^{6}_{101}E(xz)=\rho^{6}_{110}a_{0}z=z$, hence $\rho^{6}_{101}=a_{0}^{-1}$ and $F(z)=a_{0}^{-1}xz.$
\end{itemize}

By $E^{2}=F^{2}=0$, one has $F^{2}(y)=F(a_{0}^{-1}xy)=a_{0}^{-1}(xF(y)+q^{-1}F(x)y),$
\begin{itemize}
  \item if $F(x)=0$, then $F^{2}(y)=a_{0}^{-2}x^{2}y\neq 0;$
  \item if $F(x)=\rho^{4}_{200}x^{2}$, then $F^{2}(y)=a_{0}^{-1}(a_{0}^{-1}x^{2}y+\rho^{4}_{200}q^{-1}x^{2}y)=0,$ hence $\rho^{4}_{200}=-qa_{0}^{-1}$ and $F(x)=-qa_{0}^{-1}x^{2}.$
\end{itemize}

According to $yx=qxy$, then
\begin{align*}
F(yx-qxy)&=yF(x)-qxF(y)+\alpha_{2}^{-1}\alpha_{1}F(y)x-\beta^{-1}_{2}\beta_{1}qF(x)y\\
&=-a_{0}^{-1}(q^{3}+q)x^{2}y\neq 0.
\end{align*}
In summary, this series is empty.
\end{proof}

Next we turn to "nonempty" series, it only has a kind of "nonempty" series.

\begin{thm}\label{thm2-5} The {\small$\left[\left(
    \begin{array}{ccccccccc}
     0 & 0&0\\
      0 & 0&0
    \end{array}
  \right)_{0},\left(
    \begin{array}{ccccccccc}
      0 & 0&0\\
      0 & 0&0
    \end{array}
  \right)_{1}\right]$}
-series has $X_{q}(A_1)-$module algebra structures  on  $\Q$ given by
\begin{eqnarray}\label{eqn2-38}
&&K_1(x)=\lambda_1x, ~~K_2(x)=\pm\lambda_1x,\\\label{eqn2-39}
&&K_1(y)=\lambda_2y, ~~K_2(y)=\pm\lambda_2y,\\\label{eqn2-40}
&&K_1(z)=\lambda_3z, ~~K_2(z)=\pm\lambda_3z,\\\label{eqn2-41}
&&E(x)=F(x)=E(y)=F(y)=E(z)=F(z)=0,
\end{eqnarray}
where $\lambda_1, \lambda_2, \lambda_3\in \C^{*}$, they are pairwise nonisomorphic.
\end{thm}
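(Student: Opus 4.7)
The proof splits into three stages: existence of the claimed structures, uniqueness within this series, and pairwise non-isomorphism. For existence, I would verify that the prescriptions (\ref{eqn2-38})--(\ref{eqn2-41}) satisfy all defining relations of $X_q(A_1)$. Because $E$ and $F$ act as zero operators, the commutation relations $K_iE = (-1)^{i-1}q^{-1}EK_i$ and $K_iF = (-1)^{i-1}qFK_i$, together with $E^2 = F^2 = 0$, and the module-algebra compatibility with the quantum polynomial relations (\ref{eqn1-11})--(\ref{eqn1-13}) are all trivially satisfied. The only genuine constraint is (\ref{eqn1-6}): having $EF - FE = 0$ forces $K_2K_1^{-1} = K_2^{-1}K_1$, i.e.\ $K_1^2 = K_2^2$ as operators on $\Q$. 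Evaluating on each generator, this amounts to $\mu_i^2 = \lambda_i^2$ (where $\mu_i$ denotes the $K_2$-eigenvalue on the $i$-th generator), so that $K_2$ scales each generator by $\pm$ the corresponding $K_1$-eigenvalue, matching the form in the theorem.

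For uniqueness within the series, suppose $\pi$ is any $X_q(A_1)$-module-algebra structure on $\Q$ with $(M_{EF})_0 = 0$ and $(M_{EF})_1 = 0$; then each of $E(x),E(y),E(z),F(x),F(y),F(z)$ has neither constant nor linear part. Since $E$ and $F$ act as twisted derivations through the coproducts (\ref{eqn1-9})--(\ref{eqn1-10}), a short induction on monomial degree shows that both $E$ and $F$ strictly raise the polynomial degree of a homogeneous element by at least one. Consequently $(EF - FE)(x)$ has polynomial degree $\geq 3$, whereas $(K_2K_1^{-1} - K_2^{-1}K_1)(x)/(q-q^{-1}) = \frac{\mu_1\lambda_1^{-1} - \mu_1^{-1}\lambda_1}{q-q^{-1}}\,x$ is linear. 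Equating them forces $\mu_1^2 = \lambda_1^2$, and the analogous arguments on $y$ and $z$ give $\mu_i^2 = \lambda_i^2$ for every $i$. In particular, both sides of (\ref{eqn1-6}) already vanish on generators.

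The remaining and principal technical step is to deduce from these weight conditions that $E$ and $F$ actually act as zero on all generators. Writing $\mu_i = \epsilon_i\lambda_i$ with $\epsilon_i \in \{+1,-1\}$, the relations $K_1E = q^{-1}EK_1$ and $K_2E = -q^{-1}EK_2$ imply that any monomial $x^my^nz^l$ appearing with nonzero coefficient in $E(x)$ must satisfy both $\lambda_1^{m-1}\lambda_2^n\lambda_3^l = q^{-1}$ and $\epsilon_1^{m-1}\epsilon_2^n\epsilon_3^l = -1$, and analogous eigenvalue conditions govern the expansions of $E(y),E(z),F(x),F(y),F(z)$. Substituting these power-series expansions (of degree $\geq 2$) into the six product-rule compatibility equations (\ref{eqn2-8})--(\ref{eqn2-13}), together with $E^2 = F^2 = 0$ and the already established $EF = FE$ on generators, yields an overdetermined system; arguing case by case on the sign pattern $(\epsilon_1,\epsilon_2,\epsilon_3)$ and on the lowest-degree monomial in each component, in the same spirit as the contradictions derived in the proof of Lemma \ref{lem2-4}, one shows that the only solution is the trivial one, using throughout that $q$ is not a root of unity. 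Tracking all sign branches and the cancellations across the six compatibility equations is the delicate combinatorial bookkeeping that makes this step nontrivial.

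Finally, for pairwise non-isomorphism, when $t = 0$ we have $\mathrm{Aut}(\Q) \cong (\C^*)^3$ acting by diagonal scaling on the generators, so a conjugation $\Psi K_i \Psi^{-1}$ by such an automorphism preserves each eigenvalue of $K_i$, since the scaling factors cancel. Therefore the tuple $(\lambda_1,\lambda_2,\lambda_3,\mu_1,\mu_2,\mu_3)$ is an invariant of the isomorphism class, and distinct choices yield pairwise non-isomorphic structures.
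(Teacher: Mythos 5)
Your proposal is correct and follows essentially the same route as the paper: existence is a routine verification, the degree argument on $(EF-FE)$ versus the linear right-hand side of (\ref{eqn1-6}) forces $\alpha_2^2=\alpha_1^2$, $\beta_2^2=\beta_1^2$, $\gamma_2^2=\gamma_1^2$, the vanishing of $E$ and $F$ in higher degrees is reduced to the weight/consistency analysis of Lemma \ref{lem2-4}, and non-isomorphism follows because every automorphism of $\Q$ (with $t=0$, a diagonal scaling) commutes with the diagonal actions of $K_1$ and $K_2$. Your sketch of the final elimination step is in fact slightly more explicit than the paper's one-line appeal to Lemma \ref{lem2-4}, but it is the same idea carried out to the same depth.
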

\begin{proof}
It is easy to check that (\ref{eqn2-38})-(\ref{eqn2-41}) determine a well-defined $X_{q}(A_1)$-action consistent with the multiplication in $X_{q}(A_1)$ and in  $\Q$, as
well as with comultiplication in $X_{q}(A_1)$. 
Prove that there are no other $X_{q}(A_1)$-actions here. 
Note that an application  of (\ref{eqn1-6}) to $x, y$ or $z$ has zero
projection to $\Q_{1}$, ie. $(EF-FE)(x)=(EF-FE)(y)=(EF-FE)(z)=0$,  
because in this series $E$ and $F$ send any monomial to a sum
of the monomials of higher degree. 
Therefore,
\begin{eqnarray*}
  &&\frac{K_{2}K_{1}^{-1}-K_{2}^{-1}K_{1}}{q-q^{-1}}(x)=\frac{\alpha_{2}\alpha_{1}^{-1}
  -\alpha_{1}\alpha_{2}^{-1}}{q-q^{-1}}x=0, \\
  &&\frac{K_{2}K_{1}^{-1}-K_{2}^{-1}K_{1}}{q-q^{-1}}(y)=\frac{\beta_{2}\beta_{1}^{-1}
  -\beta_{1}\beta_{2}^{-1}}{q-q^{-1}}y=0,\\
  &&\frac{K_{2}K_{1}^{-1}-K_{2}^{-1}K_{1}}{q-q^{-1}}(z)=\frac{\gamma_{2}\gamma_{1}^{-1}
  -\gamma_{2}^{-1}\gamma_{1}}{q-q^{-1}}z=0, 
\end{eqnarray*}
and hence 
\begin{eqnarray*}
\alpha_{2}\alpha_{1}^{-1}-\alpha_{2}^{-1}\alpha_{2}=
\beta_{2}\beta_{1}^{-1}-\beta_{2}^{-1}\beta_{1}=
\gamma_{2}\gamma_{1}^{-1}-\gamma_{2}^{-1}\gamma_{2}=0,
\end{eqnarray*}
which leads to $\alpha_{1}^{2}=\alpha_{2}^{2}$, $\beta_{1}^{2}=\beta_{2}^{2}$ and $\gamma_{1}^{2}=\gamma_{2}^{2}$,
let $\alpha_{1}=\lambda_{1}$, $\beta_{1}=\lambda_{2}$ and $\gamma_{1}=\lambda_{3}$,
we have   $\alpha_{2}=\pm\lambda_{1}$, $\beta_{2}=\pm\lambda_{2}$ and $\gamma_{2}=\pm\lambda_{3}$.
To prove (\ref{eqn2-41}),
note that if $E(x)\neq 0$ or $F(y)\neq 0$, then they are  a sum of the monomials that their degrees are greater than 1.  It is similar to the proof of  Lemma \ref{lem2-4},
we get that this is impossible, because they can not satisfy  conditions of $X_{q}(A_1)$-module algebra on $\Q$.

To see that the $X_{q}(A_1)$-module algebra structures are pairwise nonisomorphic,
observe that all the automorphisms of $\Q$ commute with the
actions of $K_{1}$ and $K_2$.
\end{proof}

\section{When $t\neq0$, classification of  $X_{q}(A_{1})$-module algebra structures  on $\Q$   }\label{sect-3}

In this section, we suppose  the automorphism $\Psi$ of $\Q$ as follows:
$$\Psi(x)=\alpha x,~~~\Psi(y)=\beta y+txz,~~~\Psi(z)=\gamma z,~~~(\alpha,\beta,\gamma,t\in \mathbb{C}^{*}),$$
and $\mathrm{Aut}(\Q)\cong \mathbb{C}\rtimes \left(\mathbb{C}^{*}\right)^{3}$.
One can have
 $$\Psi^{-1}(x)=\alpha^{-1} x,~~~\Psi^{-1}(y)=\beta^{-1} y-t\alpha^{-1}\beta^{-1}\gamma^{-1}xz,~~~\Psi^{-1}(z)=\gamma^{-1} z.$$

In the following, we will begin to discuss the $X_{q}(A_{1})$-module algebra structures  on  $\Q$ with $t\neq0$, ie. here $K_{1},K_{2}\in \mathbb{C}\rtimes \left(\mathbb{C}^{*}\right)^{3}$.
In this Section, our research method is similar to Section  \ref{sect-2}.

\subsection{Properties of  $X_{q}(A_{1})$-module algebras  on $\Q$  }\label{sect-3.1}

It is easy to see that any action of $\X$ on $\Q$ is determined by the following $4\times 3$ matrix with entries from $\Q$:
\begin{eqnarray}\label{eqn3-1}
M=\left(
    \begin{array}{cccccc}
      K_{1}(x) & K_{1}(y) & K_{1}(z)\\
      K_{2}(x) & K_{2}(y) & K_{2}(z)\\
      E(x) & E(y) & E(z)\\
      F(x) & F(y) & F(z)
    \end{array}
  \right).
\end{eqnarray}
Given a $\X$-module algebra structure on $\Q$, obviously, the action of $K_{1}$(or $K_{2}$) is determined by an automorphism of $\Q$,
in other words, the actions  of $K_{1}$ and $K_{2}$ are determined by a matrix $M_{K_{1}K_{2}}$ as follows
\begin{eqnarray}\label{eqn3-2}
\begin{array}{lllllll}
M_{K_{1}K_{2}}\overset{definition}=&\left(
    \begin{array}{cccccc}
    K_{1}(x) & K_{1}(y) & K_{1}(z)\\
      K_{2}(x) & K_{2}(y) & K_{2}(z)
    \end{array}
  \right)&~&~&~\\
&=\left(
    \begin{array}{cccccc}
    \alpha_{1}(x) & \beta_{1}(y)+t_{1}xz & \gamma_{1}(z)\\
    \alpha_{2}(x) & \beta_{2}(y)+t_{2}xz & \gamma_{2}(z)\\
    \end{array}
  \right),
\end{array}
\end{eqnarray}
where $\alpha_{i},\beta_{i}, \gamma_{i}, t_i\in \mathbb{C}^{*}$ for $i\in\{1,2\}$.

\begin{lem}\label{lem3-1}
For all $\alpha_{i},\beta_{i}, \gamma_{i}, t_i\in \mathbb{C}^{*}$,  $i\in\{1,2\}$,  either $\beta_{i}=\alpha_{i}\gamma_{i}$ or $t_i=\left(\beta_{i}-\alpha_{i}\gamma_{i}\right)t$, where $t\in \mathbb{C}^{*}.$
\end{lem}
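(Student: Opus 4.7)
The plan is to exploit the Hopf-algebra relation $K_1 K_2 = K_2 K_1$ from (\ref{eqn1-1}), applied to the generator $y$. Both $K_i$ act diagonally on $x$ and on $z$, so commutativity is automatic on those two generators and yields no constraint; the only nontrivial information comes from applying both compositions to $y$.

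First I would expand $(K_1 K_2)(y)$ and $(K_2 K_1)(y)$ directly from the formulas for $M_{K_1 K_2}$ in (\ref{eqn3-2}), using that each $K_i$ is an algebra homomorphism, so in particular $K_i(xz) = K_i(x) K_i(z) = \alpha_i\gamma_i\, xz$. Each side produces a linear combination of $y$ and $xz$: the $y$-coefficients automatically agree as $\beta_1\beta_2$, while matching the $xz$-coefficients yields the key identity
\begin{equation*}
(\beta_2 - \alpha_2\gamma_2)\, t_1 \;=\; (\beta_1 - \alpha_1\gamma_1)\, t_2.
\end{equation*}

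A short case analysis on the two differences $\beta_i - \alpha_i\gamma_i$ then finishes the proof. If $\beta_1 = \alpha_1\gamma_1$ and $\beta_2 = \alpha_2\gamma_2$, the first alternative of the lemma already holds for each $i$. If instead $\beta_1 \neq \alpha_1\gamma_1$ and $\beta_2 \neq \alpha_2\gamma_2$, then I would define $t := t_1/(\beta_1 - \alpha_1\gamma_1) \in \mathbb{C}^*$; the identity above immediately forces $t_2/(\beta_2 - \alpha_2\gamma_2) = t$ as well, so that $t_i = (\beta_i - \alpha_i\gamma_i)\,t$ holds for both $i$ with the common constant $t$. The mixed case in which exactly one of the differences $\beta_i - \alpha_i\gamma_i$ vanishes is ruled out, since the identity would then force the corresponding $t_i$ to be zero, contradicting the standing hypothesis $t_i \in \mathbb{C}^*$.

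The argument is essentially a short computation followed by a trichotomy, so no serious obstacle is expected. The only point demanding any care is the correct homomorphism expansion $K_i(t_j xz) = t_j \alpha_i\gamma_i\, xz$ when composing the two automorphisms; once that is in place, the $xz$-coefficients line up cleanly and the three cases dispose of the claim.
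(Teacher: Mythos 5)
Your proposal is correct and follows essentially the same route as the paper: apply the commutation relation $K_1K_2=K_2K_1$ to $y$, use that each $K_i$ is an algebra homomorphism so $K_i(xz)=\alpha_i\gamma_i\,xz$, match the $xz$-coefficients to get $t_1(\beta_2-\alpha_2\gamma_2)=t_2(\beta_1-\alpha_1\gamma_1)$, and conclude by the trichotomy on the differences $\beta_i-\alpha_i\gamma_i$. Your explicit exclusion of the mixed case (exactly one difference vanishing) is a small point the paper leaves implicit, but the argument is the same.
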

\begin{proof}For all $\alpha_{i},\beta_{i}, \gamma_{i}, t_i\in \mathbb{C}^{*}$,  $i\in\{1,2\}$, we have 
$$K_i(y)=\beta_{i}y+t_ixz~~\mathrm{and}~~K^{-1}_{i}(y)=\beta_{i}^{-1}y-t_i\alpha_{i}^{-1}\beta_{i}^{-1}\gamma_{i}^{-1}xz$$
by (\ref{eqn3-2}).
It is to easy check $K_iK^{-1}_{i}(y)=y,$ and 
\begin{eqnarray*}
K_1K_2(y)&=&K_1(\beta_{2}y+t_2xz)\\
&=&\beta_{1}\beta_{2}y+\beta_{2}t_1xz+t_2\alpha_{1}\gamma_{1}xz,\\
K_2K_1(y)&=&K_2(\beta_{1}y+t_1xz)\\
&=&\beta_{1}\beta_{2}y+\beta_{1}t_2xz+t_1\alpha_{2}\gamma_{2}xz.
\end{eqnarray*}
By the definition of module algebra and (\ref{eqn1-1}), we have $t_1(\beta_{2}-\alpha_{2}\gamma_{2})=t_2(\beta_{1}-\alpha_{1}\gamma_{1})$,
for $t_i\in \mathbb{C}^{*},i=1,2$, hence, either $\beta_{i}=\alpha_{i}\gamma_{i}$ or $\frac{t_1}{t_2}=\frac{\beta_{1}-\alpha_{1}\gamma_{1}}{\beta_{2}-\alpha_{2}\gamma_{2}}$, we can write the latter as $t_i=\left(\beta_{i}-\alpha_{i}\gamma_{i}\right)t$, where $t\in \mathbb{C}^{*}.$
\end{proof}

It is easy to see that every monomial $x^{m_{1}}z^{m_{3}}\in\Q$ is an eigenvector of $K_{1}$(or $K_{2}$),
and the associated eigenvalue $\alpha_{1}^{m_{1}}\gamma_{1}^{m_{3}}$ (or $\alpha_{2}^{m_{1}}\gamma_{2}^{m_{3}}$) is  called the $K_{1}$-weight (or $K_{2}$-weight) of this monomial,
which will be written as
$$wt_{K_{1}}(x^{m_{1}}z^{m_{3}})=\alpha_{1}^{m_{1}}\gamma_{1}^{m_{3}},$$
$$wt_{K_{2}}(x^{m_{1}}z^{m_{3}})=\alpha_{2}^{m_{1}}\gamma_{2}^{m_{3}}.$$

We will also need another matrix  $M_{EF}$ as follows
\begin{eqnarray}\label{eqn2-3}
M_{EF}\overset{definition}=\left(
    \begin{array}{cccccc}
      E(x) & E(y) & E(z)\\
      F(x) & F(y) & F(z)
    \end{array}
  \right).
\end{eqnarray}

Obviously, $K_{1}(x), K_{2}(x), E(x), F(x)$ and $K_{1}(z), K_{2}(z), E(z), F(z)$ are weight vectors for $K_{1}$ and $K_{2}$,  then
\begin{eqnarray}\label{eqn3-4}
\begin{array}{lllllll}
wt_{K_{i}}\left(M\right)&\overset{definition}=\left(
    \begin{array}{cccccc}
    wt_{K_{i}}(K_{1}(x))  & wt_{K_{i}}(K_{1}(z))\\
      wt_{K_{i}}(K_{2}(x))  & wt_{K_{i}}(K_{2}(z))\\
      wt_{K_{i}}(E(x))  & wt_{K_{i}}(E(z))\\
      wt_{K_{i}}(F(x))  & wt_{K_{i}}(F(z))
    \end{array}
  \right)&~&~&~\\
~&\bowtie \left(
    \begin{array}{cccccc}
    wt_{K_{i}}(x)  & wt_{K_{i}}(z)\\
      wt_{K_{i}}(x)   & wt_{K_{i}}(z)\\
     (-1)^{i-1}q^{-1}wt_{K_{i}}(x)   & (-1)^{i-1}q^{-1}wt_{K_{i}}(z)\\
     (-1)^{i-1}qwt_{K_{i}}(x)   & (-1)^{i-1}qwt_{K_{i}}(z)
    \end{array}
  \right)&~&~&~\\
~&= \left(
    \begin{array}{cccccc}
    \alpha_{i}  & \gamma_{i}\\
    \alpha_{i}  & \gamma_{i}\\
     (-1)^{i-1}q^{-1}\alpha_{i}   & (-1)^{i-1}q^{-1}\gamma_{i}\\
      (-1)^{i-1}q\alpha_{i}   & (-1)^{i-1}q\gamma_{i}
    \end{array}
  \right).
\end{array}
\end{eqnarray}

Same as Section  \ref{sect-2},  we denote by $\left(M\right)_{j}$ the $j$-th homogeneous component of $M$.
Obviously, if $\left(M\right)_{j}$ is  nonzero, one can calculate  the associated eigenvalues.

Set $a_{0}, b_{0},c_{0},a'_{0},  b'_{0}, c'_{0}\in \C$, we obtain the $0$-th homogeneous component of $M$ as follow:
\begin{eqnarray}\label{eqn3-5}
\left(M\right)_{0}=\left(
    \begin{array}{cccccc}
      0 &  0 & 0\\
      0 &  0 & 0\\
      a_{0} &  b_{0} & c_{0}\\
     a'_{0} &  b'_{0} & c'_{0}
    \end{array}
  \right)_{0}.
\end{eqnarray}
Then,  we have
\begin{eqnarray}\label{eqn3-6}
\begin{array}{lllllll}
 wt_{K_{i}}\left(\left(M_{EF}\right)_{0}\right)&=\left(
    \begin{array}{cccccc}
     (-1)^{i-1}q^{-1}\alpha_{i}  & (-1)^{i-1}q^{-1}\gamma_{i}\\
      (-1)^{i-1}q\alpha_{i}  & (-1)^{i-1}q\gamma_{i}
    \end{array}
  \right)_{0}
\bowtie \left(
    \begin{array}{cccccc}
   1 &   1\\
   1 &    1
    \end{array}
  \right)_{0},
\end{array}
\end{eqnarray}

According to $q$ is not a root of the unit and relation (\ref{eqn3-6}), it means that $a_{0}$ and $a'_{0}$ ($c_{0}$ and $c'_{0}$) should contain at least one $0$.

An application of $E$ and $F$ to the relations (\ref{eqn1-11})-(\ref{eqn1-13}) by using equation (\ref{eqn3-2}), one has
\begin{eqnarray}\label{eqn3-7}
E(y)x-qE(x)y+K_{2}K_{1}^{-1}(y)E(x)-qK_{2}K_{1}^{-1}(x)E(y)=0, \\ \label{eqn3-8}
E(z)y-qE(y)z+K_{2}K_{1}^{-1}(z)E(y)-qK_{2}K_{1}^{-1}(y)E(z)=0, \\ \label{eqn3-9}
E(z)x-qE(x)z+K_{2}K_{1}^{-1}(z)E(x)-qK_{2}K_{1}^{-1}(x)E(z)=0, \\ \label{eqn3-10}
yF(x)-qxF(y)+F(y)K_{2}^{-1}K_{1}(x)-qF(x)K_{2}^{-1}K_{1}(y)=0, \\\label{eqn3-11}
zF(y)-qyF(z)+F(z)K_{2}^{-1}K_{1}(y)-qF(y)K_{2}^{-1}K_{1}(z)=0, \\\label{eqn3-12}
zF(x)-qxF(z)+F(z)K_{2}^{-1}K_{1}(x)-qF(x)K_{2}^{-1}K_{1}(z)=0. 
\end{eqnarray}
Which certainly implies
\begin{eqnarray*}
b_{0}\left(1-q\alpha_{1}^{-1}\alpha_{2}\right)=a_{0}\left(\beta_{1}^{-1}\beta_{2}-q\right)=
c_{0}\left(1-q\beta_{1}^{-1}\beta_{2}\right)=b_{0}\left(\gamma_{1}^{-1}\gamma_{2}-q\right)=0,\\
c_{0}\left(1-q\alpha_{1}^{-1}\alpha_{2}\right)=a_{0}\left(\gamma_{1}^{-1}\gamma_{2}-q\right)=
a'_{0}\left(1-q\beta_{1}\beta_{2}^{-1}\right)=b'_{0}\left(\alpha_{1}\alpha_{2}^{-1}-q\right)=0,\\
b'_{0}\left(1-q\gamma_{1}\gamma_{2}^{-1}\right)=c'_{0}\left(\beta_{1}\beta_{2}^{-1}-q\right)=
a'_{0}\left(1-q\gamma_{1}\gamma_{2}^{-1}\right)=c'_{0}\left(\alpha_{1}\alpha_{2}^{-1}-q\right)=0.
\end{eqnarray*}

Because $q$ is not a root of the unit, $q\neq \pm1$, and from the above discussion,  
 for the $0$-st homogeneous component $\left(M_{EF}\right)_{0}$ of  $M_{EF}$, we have following lemma.

\begin{lem}\label{lem3-2}
There are $8$ cases for the $0$-st homogeneous component $\left(M_{EF}\right)_{0}$ of  $M_{EF}$, as follows:
\begin{eqnarray}\label{eqn3-13}
&&\left(
    \begin{array}{ccccccccc}
      a_{0} &  0 & 0\\
      0 &  0 & 0
    \end{array}
  \right)_{1} \Rightarrow\alpha_{1}=q,    \alpha_{2}=-q, \beta_{1}^{-1}\beta_{2}=q, \gamma_{1}^{-1}\gamma_{2}=q;\\
&&\left(
    \begin{array}{ccccccccc}
       0 &  b_{0}& 0\\
      0 &  0& 0
    \end{array}
  \right)_{1} \Rightarrow     \alpha_{1}^{-1}\alpha_{2}=q^{-1}, \gamma_{1}^{-1}\gamma_{2}=q;\\
&&\left(
    \begin{array}{ccccccccc}
      0 &  0 & c_{0}\\
      0 &  0 & 0
    \end{array}
  \right)_{1} \Rightarrow\gamma_{1}=q,    \gamma_{2}=-q, \beta_{1}^{-1}\beta_{2}=q^{-1}, \alpha_{1}^{-1}\alpha_{2}=q^{-1};\\
&&\left(
    \begin{array}{ccccccccc}
       0 & 0& 0\\
       a'_{0} &  0& 0
    \end{array}
  \right)_{1} \Rightarrow   \alpha_{1}=q^{-1},\alpha_{2}=-q^{-1}, \beta_{1}\beta^{-1}_{2}=q^{-1}, \gamma_{1}\gamma^{-1}_{2}=q^{-1};\\
&&\left(
    \begin{array}{ccccccccc}
      0 &  0 & 0\\
      0 &  b'_{0} & 0
    \end{array}
  \right)_{1} \Rightarrow    \alpha_{1}\alpha^{-1}_{2}=q, \gamma_{1}\gamma_{2}^{-1}=q^{-1};\\
&&\left(
    \begin{array}{ccccccccc}
       0 & 0& 0\\
      0 &  0&  c'_{0}
    \end{array}
  \right)_{1} \Rightarrow  \gamma_{1}=q^{-1}, \gamma_{2}=-q^{-1}    \beta_{1}\beta^{-1}_{2}=q, \alpha_{1}\alpha^{-1}_{2}=q;\\
&&\left(
    \begin{array}{ccccccccc}
       0&  b_{0} & 0\\
      0 &  b'_{0}  & 0
    \end{array}
  \right)_{1} \Rightarrow \alpha_{1}^{-1}\alpha_{2}=q^{-1}, \gamma_{1}^{-1}\gamma_{2}=q;\\
&&\left(
    \begin{array}{ccccccccc}
       0 &  0& 0\\
      0 &  0& 0
    \end{array}
  \right)_{1} \Rightarrow  \mathrm{it~ does~ not~ determine~ the~ weight~ constants ~at~ all.} 
\end{eqnarray}
\end{lem}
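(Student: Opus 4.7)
The plan is to mimic the derivation of Lemma \ref{lem2-1} in the $t=0$ setting, while keeping track of how the non-eigenvector nature of $y$ (the new feature introduced by $t\neq 0$) modifies the constraints. Write $E(x)=a_{0}+\cdots$, $E(y)=b_{0}+\cdots$, $E(z)=c_{0}+\cdots$, $F(x)=a'_{0}+\cdots$, $F(y)=b'_{0}+\cdots$, $F(z)=c'_{0}+\cdots$, where the omitted terms are of positive degree. Applying $E$ and $F$ to the quantum-plane relations (\ref{eqn1-11})--(\ref{eqn1-13}) via the comultiplication (\ref{eqn1-9})--(\ref{eqn1-10}) yields exactly equations (\ref{eqn3-7})--(\ref{eqn3-12}), which is the starting point.

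The key observation is that although $K_{2}K_{1}^{-1}(y)=\beta_{1}^{-1}\beta_{2}y+\sigma\,xz$ for some scalar $\sigma$ (and similarly for $K_{2}^{-1}K_{1}(y)$), the $xz$-tail lies in $\Q_{2}$ and therefore does not contribute to the degree-$1$ projection of (\ref{eqn3-7})--(\ref{eqn3-12}). Consequently, projecting these six equations onto $\Q_{1}$ produces exactly the same linear system for $a_{0},b_{0},c_{0},a'_{0},b'_{0},c'_{0}$ as in Section \ref{sect-2}, namely the system already displayed in the excerpt just before the lemma. Each of these scalar equations has the form (scalar ratio condition)$\,\cdot\,$(one of $a_{0},\ldots,c'_{0}$)$\,=0$, so it either forces the scalar to vanish or pins down a ratio among $\alpha_{i},\beta_{i},\gamma_{i}$.

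It remains to enumerate which combinations of nonvanishing scalars are mutually compatible, using that $q$ is not a root of unity. Since the weight matrix (\ref{eqn3-4}) has only the $x$- and $z$-columns (because $y$ is not a $K_{i}$-eigenvector when $t\neq 0$), the pointwise $\beta_{i}$-constraints from Lemma \ref{lem2-1} no longer appear, and only the ratios involving $\alpha_{i},\gamma_{i}$ survive. Pairwise compatibility is then a short check: for any two of $\{a_{0},b_{0},c_{0}\}$ (or of $\{a'_{0},b'_{0},c'_{0}\}$) the implied ratios of $\alpha_{1}^{-1}\alpha_{2}$ or $\gamma_{1}^{-1}\gamma_{2}$ are mutually exclusive, while $a_{0}$ with $a'_{0}$ (and $c_{0}$ with $c'_{0}$) would force $\alpha_{1}=q$ and $\alpha_{1}=q^{-1}$ simultaneously. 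The \emph{only} nontrivial coexistence is $b_{0}$ with $b'_{0}$: both impose the same ratios $\alpha_{1}^{-1}\alpha_{2}=q^{-1}$ and $\gamma_{1}^{-1}\gamma_{2}=q$, so they may be simultaneously nonzero. This yields the six singleton cases, the one doubled case $\{b_{0},b'_{0}\}$, and the trivial all-zero case, producing the claimed $8$ configurations.

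The main obstacle is the bookkeeping: one must first verify carefully that the $xz$-tail in $K_{2}K_{1}^{-1}(y)$ and $K_{2}^{-1}K_{1}(y)$ really does not perturb the degree-$1$ projection, and then run through all pairwise compatibility tests to single out the genuinely new admissible pair $\{b_{0},b'_{0}\}$. Each individual check is a one-line computation, but omitting any one of them would either introduce a spurious case or miss the new coexistence case that distinguishes Lemma \ref{lem3-2} from Lemma \ref{lem2-1}.
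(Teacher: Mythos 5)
Your proposal is correct and follows essentially the same route as the paper: project equations (\ref{eqn3-7})--(\ref{eqn3-12}) to $\Q_{1}$, note that the $xz$-tails of $K_{2}K_{1}^{-1}(y)$ and $K_{2}^{-1}K_{1}(y)$ live in degree $2$ and hence drop out, recover the Section~\ref{sect-2} system together with the weight constraints (\ref{eqn3-6}) on the $x$- and $z$-columns only, and enumerate compatible nonvanishing patterns to find the new pair $\{b_{0},b'_{0}\}$. Your explicit pairwise compatibility check is in fact more detailed than what the paper writes out; just note that the $\beta$-\emph{ratio} conditions (e.g. $\beta_{1}^{-1}\beta_{2}=q$ when $a_{0}\neq0$) do survive via the projected relations --- only the pointwise conditions $\beta_{i}=\pm q^{\pm1}$ disappear.
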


Next, for the $1$-st homogeneous component, due to $q$ is not a root of the unit, one has
\begin{eqnarray*}
&&wt_{K_{1}}(E(x)) = q^{-1}\alpha_{1}=q^{-1}wt_{K_{1}}(x)\neq wt_{K_{1}}(x),\\
&&wt_{K_{2}}(E(x)) =-q^{-1}\alpha_{2}=-q^{-1}wt_{K_{2}}(x)\neq wt_{K_{2}}(x),
\end{eqnarray*}
which implies
$$(E(x))_{1}=a_{1}z,$$
for some  $ a_{1}\in \C$,
and in a similar way we have
{\small$$\left(M_{EF}\right)_{1}=\left(\begin{array}{cccccc}
a_{1}z   &  b_{1}x+b_{2}y+b_{3}z    &c_{1}x\\
a'_{1}z &  b'_{1}x+b'_{2}y+b'_{3}z  &c'_{1}x
    \end{array}
  \right)_{1}$$}
where $b_{1},b_{2},b_{3},c_{1},a'_{1},b'_{1},b'_{2},b'_{3},c'_{1}\in \C$.
In fact,
\begin{eqnarray}
\begin{array}{lllllll}
wt_{K_{i}}\left((M_{EF})_{1}\right)&\bowtie \left(
    \begin{array}{cccccc}
   (-1)^{i-1}q^{-1}\alpha_{i}  & (-1)^{i-1}q^{-1}\gamma_{i}\\
      (-1)^{i-1}q\alpha_{i}  & (-1)^{i-1}q\gamma_{i}
    \end{array}
  \right)\bowtie \left(
    \begin{array}{cccccc}
       \gamma_{i} & \alpha_{i}\\
     \gamma_{i} & \alpha_{i}
    \end{array}
  \right).
\end{array}
\end{eqnarray}\label{eqn2-4}

Now, we project (\ref{eqn3-7})-(\ref{eqn3-12}) to $\Q_{2}$, one can  obtain the following conclusion.

If $a_{0}\neq0$, then $\alpha_{1}=q,    \alpha_{2}=-q, \beta_{1}^{-1}\beta_{2}=q, \gamma_{1}^{-1}\gamma_{2}=q,$ and we have
$$(E(y))_{1}x-q(E(x))_{1}y+qy(E(x))_{1}+(t_{2}\beta^{-1}_{1}+qt_{1}\beta^{-1}_{1})xz(E(x))_{0}+qx(E(y))_{1}=0,$$
$$b_{1}\left(1+q\right)x^{2}+2qb_{2}xy+a_{1}q\left(1-q\right)yz
+(2qb_{3}+a_{0}t_{2}\beta^{-1}_{1}+qa_{0}t_{1}\beta^{-1}_{1})xz=0,$$
\begin{eqnarray}
\begin{cases}
a_{1}=0,\\
 b_{1}=0,\\
b_{2}=0,\\
b_{3}=\frac{-a_{0}(t_{2}+qt_{1})}{2q\beta_{1}}.
\end{cases}
\end{eqnarray}

If $a_{0}=0$, then
$$(E(y))_{1}x-q(E(x))_{1}y+\beta_{1}^{-1}\beta_{2}y(E(x))_{1}-q\alpha_{1}^{-1}\alpha_{2}x(E(y))_{1}=0,$$
$$b_{1}\left(1-q\alpha_{1}^{-1}\alpha_{2}\right)x^{2}+b_{2}\left(1-\alpha_{1}^{-1}\alpha_{2}\right)yx+b_{3}\left(1-\alpha_{1}^{-1}\alpha_{2}\right)zx
+a_{1}\left(\beta_{1}^{-1}\beta_{2}-q^{2}\right)yz=0,$$
\begin{eqnarray}
\begin{cases}
b_{1}\neq0\Rightarrow \alpha_{1}^{-1}\alpha_{2}=q^{-1},\\
b_{2}\neq0\Rightarrow \alpha_{1}^{-1}\alpha_{2}=1,\\
b_{3}\neq0\Rightarrow \alpha_{1}^{-1}\alpha_{2}=1,\\
a_{1}\neq0\Rightarrow\beta_{1}^{-1}\beta_{2}=q^{2}.
\end{cases}
\end{eqnarray}

If $c_{0}\neq0$, then $\gamma_{1}=q,    \gamma_{2}=-q, \beta_{1}^{-1}\beta_{2}=q^{-1}, \alpha_{1}^{-1}\alpha_{2}=q^{-1},$ and we have
$$(E(z))_{1}y-q(E(y))_{1}z-z(E(y))_{1}-y(E(z))_{1}-q(t_{2}\beta^{-1}_{1}+q^{-1}t_{1}\beta^{-1}_{1})xz(E(z))_{0}=0,$$
$$c_{1}\left(1-q\right)xy-2qb_{2}yz-b_{3}\left(1+q\right)z^{2}
-(2qb_{1}+qc_{0}t_{2}\beta^{-1}_{1}+c_{0}t_{1}\beta^{-1}_{1})xz=0,$$
\begin{eqnarray}
\begin{cases}
b_{2}=0,\\
b_{3}=0,\\
c_{1}=0,\\
b_{1}=\frac{-c_{0}(t_{1}+qt_{2})}{2q\beta_{1}}.
\end{cases}
\end{eqnarray}

If $c_{0}=0$, then
$$(E(z))_{1}y-q(E(y))_{1}z+\gamma_{1}^{-1}\gamma_{2}z(E(y))_{1}-q\beta_{1}^{-1}\beta_{2}y(E(z))_{1}=0,$$
$$c_{1}\left(1-q^{2}\beta_{1}^{-1}\beta_{2}\right)xy+qb_{1}\left(\gamma_{1}^{-1}\gamma_{2}-1\right)xz+qb_{2}\left(\gamma_{1}^{-1}\gamma_{2}-1\right)yz
+b_{3}\left(\gamma_{1}^{-1}\gamma_{2}-q\right)z^{2}=0,$$
\begin{eqnarray}
\begin{cases}
c_{1}\neq0\Rightarrow \beta_{1}^{-1}\beta_{2}=q^{-2},\\
b_{1}\neq0\Rightarrow \gamma_{1}^{-1}\gamma_{2}=1,\\
b_{2}\neq0\Rightarrow \gamma_{1}^{-1}\gamma_{2}=1,\\
b_{3}\neq0\Rightarrow\gamma_{1}^{-1}\gamma_{2}=q.
\end{cases}
\end{eqnarray}

$$(E(z))_{1}x-q(E(x))_{1}z+\gamma_{1}^{-1}\gamma_{2}z(E(x))_{1}-q\alpha_{1}^{-1}\alpha_{2}x(E(z))_{1}=0,$$
$$c_{1}(1-q\alpha_{1}^{-1}\alpha_{2})x^{2}+a_{1}(\gamma_{1}^{-1}\gamma_{2}-q)z^{2}=0,$$
\begin{eqnarray}
\begin{cases}
c_{1}\neq0\Rightarrow \alpha_{1}^{-1}\alpha_{2}=q^{-1},\\
a_{1}\neq0\Rightarrow \gamma_{1}^{-1}\gamma_{2}=q.
\end{cases}
\end{eqnarray}

If $a'_{0}\neq0$, then $\alpha_{1}=q^{-1},    \alpha_{2}=-q^{-1}, \beta_{1}\beta_{2}^{-1}=q^{-1}, \gamma_{1}\gamma_{2}^{-1}=q^{-1},$ and we have
$$y(F(x))_{1}-qx(F(y))_{1}-(F(y))_{1}x-(F(x))_{1}y-q(F(x))_{0}(t_{1}\alpha^{-1}_{2}\gamma^{-1}_{2}-q^{-1}t_{2}\alpha^{-1}_{2}\gamma^{-1}_{2})xz=0,$$
$$a'_{1}\left(1-q\right)yz-b'_{1}\left(1+q\right)x^{2}-2qb'_{2}xy
+(-2qb'_{3}+a'_{0}t_{2}\alpha^{-1}_{2}\gamma^{-1}_{2}-qa'_{0}t_{1}\alpha^{-1}_{2}\gamma^{-1}_{2})xz=0,$$
\begin{eqnarray}
\begin{cases}
a'_{1}=0,\\
b'_{1}=0,\\
b'_{2}=0,\\
b'_{3}=\frac{a'_{0}(t_{2}-qt_{1})}{2q\alpha_{2}\gamma_{2}}.
\end{cases}
\end{eqnarray}

If $a'_{0}=0$, then
$$y(F(x))_{1}-qx(F(y))_{1}+\alpha_{1}\alpha_{2}^{-1}(F(y))_{1}x-q\beta_{1}\beta_{2}^{-1}(F(x))_{1}y=0,$$
$$b'_{1}\left(\alpha_{1}\alpha_{2}^{-1}-q\right)x^{2}+qb'_{2}\left(\alpha_{1}\alpha_{2}^{-1}-1\right)xy
+qb_{3}\left(\alpha_{1}\alpha_{2}^{-1}-1\right)xz+a'_{1}\left(1-q^{2}\beta_{1}\beta_{2}^{-1}\right)yz=0,$$
\begin{eqnarray}
\begin{cases}
a'_{1}\neq0\Rightarrow\beta_{1}\beta_{2}^{-1}=q^{-2},\\
b'_{1}\neq0\Rightarrow \alpha_{1}\alpha_{2}^{-1}=q,\\
b'_{2}\neq0\Rightarrow \alpha_{1}\alpha_{2}^{-1}=1,\\
b'_{3}\neq0\Rightarrow \alpha_{1}\alpha_{2}^{-1}=1.
\end{cases}
\end{eqnarray}

If $c'_{0}\neq0$, then $\gamma_{1}=q^{-1},   \gamma_{2}=-q^{-1}, \beta_{1}\beta_{2}^{-1}=q,\alpha_{1}\alpha_{2}^{-1}=q,$ and we have
$$z(F(y))_{1}-qy(F(z))_{1}+q(F(z))_{1}y
+(F(z))_{0}(t_{1}\alpha^{-1}_{2}\gamma^{-1}_{2}-qt_{2}\alpha^{-1}_{2}\gamma^{-1}_{2})xz+q(F(y))_{1}z=0,$$
$$2qb'_{2}yz+b'_{3}\left(1+q\right)z^{2}+qc'_{1}(1-q)xy
+(2qb'_{1}-qc'_{0}t_{2}\alpha^{-1}_{2}\gamma^{-1}_{2}+c'_{0}t_{1}\alpha^{-1}_{2}\gamma^{-1}_{2})xz=0,$$
\begin{eqnarray}
\begin{cases}
b'_{2}=0,\\
b'_{3}=0,\\
c'_{1}=0,\\
b'_{1}=\frac{c'_{0}(qt_{2}-t_{1})}{2q\alpha_{2}\gamma_{2}}.
\end{cases}
\end{eqnarray}

If $c'_{0}=0$, then
$$z(F(y))_{1}-qy(F(z))_{1}+\beta_{1}\beta_{2}^{-1}(F(z))_{1}y-q\gamma_{1}\gamma^{-1}_{2}(F(y))_{1}z=0,$$
$$qb'_{1}\left(1-\gamma_{1}\gamma_{2}^{-1}\right)xz+qb'_{2}\left(1-\gamma_{1}\gamma_{2}^{-1}\right)yz
+b'_{3}\left(1-q\gamma_{1}\gamma_{2}^{-1}\right)z^{2}+c'_{1}\left(\beta_{1}\beta_{2}^{-1}-q^{2}\right)xy=0,$$
\begin{eqnarray}
\begin{cases}
b'_{1}\neq0\Rightarrow \gamma_{1}\gamma_{2}^{-1}=1,\\
b'_{2}\neq0\Rightarrow \gamma_{1}\gamma_{2}^{-1}=1,\\
b'_{3}\neq0\Rightarrow \gamma_{1}\gamma_{2}^{-1}=q^{-1},\\
c'_{1}\neq0\Rightarrow\beta_{1}\beta_{2}^{-1}=q^{2}.
\end{cases}
\end{eqnarray}

$$z(F(x))_{1}x-qx(F(z))_{1}+\alpha_{1}\alpha_{2}^{-1}(F(z))_{1}x-q\gamma_{1}\gamma_{2}^{-1}(F(x))_{1}z=0,$$
$$c'_{1}(\alpha_{1}\alpha_{2}^{-1}-q)x^{2}+a'_{1}(1-q\gamma_{1}\gamma_{2}^{-1})z^{2}=0,$$
\begin{eqnarray}
\begin{cases}
a'_{1}\neq0\Rightarrow \gamma_{1}\gamma_{2}^{-1}=q^{-1},\\
c'_{1}\neq0\Rightarrow \alpha_{1}\alpha_{2}^{-1}=q.
\end{cases}
\end{eqnarray}

From the above discussion, and due to $q$ is not a root of the unit, we can obtain the following lemma.
\begin{lem}\label{lem3-3}
There are $18$ cases for the $1$-st homogeneous component $\left(M_{EF}\right)_{1}$ of  $M_{EF}$, as follows:
\begin{eqnarray}
&&\left(
    \begin{array}{ccccccccc}
      a_{1}y & 0&0\\
      0      & 0&0
    \end{array}
  \right)_{1} \Rightarrow \beta_{1}=q^{-1}\alpha_{1}, \beta_{2}=-q^{-1}\alpha_{2},\beta_{1}^{-1}\beta_{2}=q, \gamma_{1}^{-1}\gamma_{2}=1;\\
&&\left(
    \begin{array}{ccccccccc}
      a_{2}z & 0&0\\
      0      & 0&0
    \end{array}
  \right)_{1} \Rightarrow \gamma_{1}=q^{-1}\alpha_{1}, \gamma_{2}=-q^{-1}\alpha_{2},\beta_{1}^{-1}\beta_{2}=q^{2}, \gamma_{1}^{-1}\gamma_{2}=q;\\
&&\left(
    \begin{array}{ccccccccc}
      0 &b_{1}x&0\\
      0      & 0&0
    \end{array}
  \right)_{1} \Rightarrow \beta_{1}=q\alpha_{1}, \beta_{2}=-q\alpha_{2},\alpha_{1}^{-1}\alpha_{2}=q^{-1}, \gamma_{1}^{-1}\gamma_{2}=1;\\
&&\left(
    \begin{array}{ccccccccc}
      0 &b_{2}z&0\\
      0      & 0&0
    \end{array}
  \right)_{1}\Rightarrow \gamma_{1}=q^{-1}\beta_{1}, \gamma_{2}=-q^{-1}\beta_{2},\alpha_{1}^{-1}\alpha_{2}=1, \gamma_{1}^{-1}\gamma_{2}=q;\\
&&\left(
    \begin{array}{ccccccccc}
      0 &0&c_{1}x\\
      0 & 0&0
    \end{array}
  \right)_{1} \Rightarrow \alpha_{1}=q^{-1}\gamma_{1}, \alpha_{2}=-q^{-1}\gamma_{2},\alpha_{1}^{-1}\alpha_{2}=q^{-1}, \beta_{1}^{-1}\beta_{2}=q^{-2};\\
&&\left(
    \begin{array}{ccccccccc}
      0 &0&c_{2}y\\
      0      & 0&0
    \end{array}
  \right)_{1}\Rightarrow \beta_{1}=q^{-1}\gamma_{1}, \beta_{2}=-q^{-1}\gamma_{2},\alpha_{1}^{-1}\alpha_{2}=1, \beta_{1}^{-1}\beta_{2}=q^{-1};\\
&&\left(
    \begin{array}{ccccccccc}
       0& 0&0\\
      a'_{1}y      & 0&0
    \end{array}
  \right)_{1} \Rightarrow \beta_{1}=q\alpha_{1}, \beta_{2}=-q\alpha_{2},\beta_{1}^{-1}\beta_{2}=q, \gamma_{1}^{-1}\gamma_{2}=1;\\
&&\left(
    \begin{array}{ccccccccc}
      0 & 0&0\\
      a'_{2}z & 0&0
    \end{array}
  \right)_{1} \Rightarrow \gamma_{1}=q\alpha_{1}, \gamma_{2}=-q\alpha_{2},\beta_{1}^{-1}\beta_{2}=q^{2}, \gamma_{1}^{-1}\gamma_{2}=q;
\end{eqnarray}
\begin{eqnarray}
&&\left(
    \begin{array}{ccccccccc}
      0 &0&0\\
      0 &b'_{1}x&0
    \end{array}
  \right)_{1}\Rightarrow \beta_{1}=q^{-1}\alpha_{1}, \beta_{2}=-q^{-1}\alpha_{2},\alpha_{1}^{-1}\alpha_{2}=q^{-1}, \gamma_{1}^{-1}\gamma_{2}=1;\\
&&\left(
    \begin{array}{ccccccccc}
      0 &0&0\\
      0& b'_{2}z&0
    \end{array}
  \right)_{1} \Rightarrow \gamma_{1}=q\beta_{1}, \gamma_{2}=-q\beta_{2},\alpha_{1}^{-1}\alpha_{2}=1, \gamma_{1}^{-1}\gamma_{2}=q;\\
&&\left(
    \begin{array}{ccccccccc}
      0 &0&0\\
      0 & 0&c'_{1}x
    \end{array}
  \right)_{1} \Rightarrow \alpha_{1}=q\gamma_{1}, \alpha_{2}=-q\gamma_{2},\alpha_{1}^{-1}\alpha_{2}=q^{-1}, \beta_{1}^{-1}\beta_{2}=q^{-2};\\
&&\left(
    \begin{array}{ccccccccc}
      0 &0&0\\
      0 &0&c'_{2}y
    \end{array}
  \right)_{1} \Rightarrow \beta_{1}=q\gamma_{1}, \beta_{2}=-q\gamma_{2},\alpha_{1}^{-1}\alpha_{2}=1, \beta_{1}^{-1}\beta_{2}=q^{-1};\\
&&\left(
    \begin{array}{ccccccccc}
      0 &0&0\\
      0 &0&0
    \end{array}
  \right)_{1} it~ does~ not~ determine~ the~ weight~ constants ~at~ all;
\end{eqnarray}
If $a_{0}\neq0$, then $\alpha_{1}=q,    \alpha_{2}=-q, \beta_{1}^{-1}\beta_{2}=q, \gamma_{1}^{-1}\gamma_{2}=q,$ and we have
\begin{eqnarray}
\begin{cases}
a_{1}=0,\\
 b_{1}=0,\\
b_{2}=0,\\
b_{3}=\frac{-a_{0}(t_{2}+qt_{1})}{2q\beta_{1}}.
\end{cases}
\end{eqnarray}
 If $c_{0}\neq0$, then $\gamma_{1}=q, \gamma_{2}=-q, \beta_{1}^{-1}\beta_{2}=q^{-1}, \alpha_{1}^{-1}\alpha_{2}=q^{-1},$ and we have
\begin{eqnarray}
\begin{cases}
b_{2}=0,\\
b_{3}=0,\\
c_{1}=0,\\
b_{1}=\frac{-c_{0}(t_{1}+qt_{2})}{2q\beta_{1}}.
\end{cases}
\end{eqnarray}
If $a'_{0}\neq0$, then $\alpha_{1}=q^{-1},    \alpha_{2}=-q^{-1}, \beta_{1}\beta_{2}^{-1}=q^{-1}, \gamma_{1}\gamma_{2}^{-1}=q^{-1},$ and we have
\begin{eqnarray}
\begin{cases}
a'_{1}=0,\\
b'_{1}=0,\\
b'_{2}=0,\\
b'_{3}=\frac{a'_{0}(t_{2}-qt_{1})}{2q\alpha_{2}\gamma_{2}}.
\end{cases}
\end{eqnarray}
If $c'_{0}\neq0$, then $\gamma_{1}=q^{-1},   \gamma_{2}=-q^{-1}, \beta_{1}\beta_{2}^{-1}=q,\alpha_{1}\alpha_{2}^{-1}=q,$ and we have
\begin{eqnarray}
\begin{cases}
b'_{2}=0,\\
b'_{3}=0,\\
c'_{1}=0,\\
b'_{1}=\frac{c'_{0}(qt_{2}-t_{1})}{2q\alpha_{2}\gamma_{2}}.
\end{cases}
\end{eqnarray}
\end{lem}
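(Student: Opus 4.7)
The plan is to mirror the strategy used to prove Lemma \ref{lem2-2}, but to keep careful track of the extra $xz$ contributions that now appear in $K_1(y)$ and $K_2(y)$ because of $t_1,t_2\neq 0$. First, using the weight tables (\ref{eqn3-4}) together with the fact that $q$ is not a root of unity and that $y$ is no longer an eigenvector of $K_i$ (so any $y$-contribution in an entry of $(M_{EF})_1$ must be analyzed together with the possible $xz$-contribution coming from $K_i(y)$), I would pin down the general form of $(M_{EF})_1$ as displayed in the excerpt, with undetermined coefficients $a_1,a'_1,b_1,b_2,b_3,b'_1,b'_2,b'_3,c_1,c'_1\in\C$ in the twelve candidate slots; in particular the $x$-slot of each column is excluded by $wt_{K_i}(E(x))=(-1)^{i-1}q^{-1}\alpha_i\neq\alpha_i$, and similarly for the other diagonal slots.

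Next, I would apply $E$ and $F$ to each of the three defining relations (\ref{eqn1-11})--(\ref{eqn1-13}) of $\Q$, using the coproducts (\ref{eqn1-9}) and (\ref{eqn1-10}) together with (\ref{eqn3-2}); this yields the six identities (\ref{eqn3-7})--(\ref{eqn3-12}). Projecting each of these onto the degree-two component $\Q_2$ gives a homogeneous linear relation among the monomials $x^2,xy,y^2,xz,yz,z^2$ whose coefficients depend on the $a_i,b_i,c_i,a'_i,b'_i,c'_i$, on the weight constants $\alpha_i,\beta_i,\gamma_i,t_i$, and on the $0$-th homogeneous component entries $a_0,b_0,c_0,a'_0,b'_0,c'_0$. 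The crucial new point compared with Section~\ref{sect-2} is that when, say, $a_0\neq 0$, the term $K_2K_1^{-1}(y)E(x)$ in the expansion of $E(yx-qxy)$ contributes an $xz$-term proportional to $a_0 t_i\beta_1^{-1}$, so the coefficient of $xz$ in the projected identity couples the unknown $b_3$ with the scalars $a_0,t_1,t_2$.

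I would then split into cases according to which of $a_0,c_0,a'_0,c'_0$ is nonzero (recall from Lemma \ref{lem3-2} that at most one of $a_0,a'_0$ and at most one of $c_0,c'_0$ can be nonzero). In the subcase $a_0=c_0=a'_0=c'_0=0$, the coupling with $t_i$ disappears and the coefficient equations reduce to exactly those obtained in Lemma \ref{lem2-2}, producing the twelve single-entry cases together with the trivial case. In each of the four subcases where one of $a_0,c_0,a'_0,c'_0$ is nonzero, the weight constants are already fixed by Lemma \ref{lem3-2}, so the projected identity collapses to a concrete polynomial equation in $x^2,xy,yz,xz$; setting each coefficient to zero forces three of the four relevant parameters to vanish and determines the last one as the stated rational expression in $a_0,t_1,t_2,\beta_1$ (respectively in $c_0,t_1,t_2,\beta_1$, $a'_0,t_1,t_2,\alpha_2,\gamma_2$, $c'_0,t_1,t_2,\alpha_2,\gamma_2$).

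The routine part is the expansion of (\ref{eqn3-7})--(\ref{eqn3-12}) and the reading off of eigenvalue constraints from the single-entry cases; the main obstacle is the bookkeeping of the coupling terms in the four nontrivial subcases, since I must be careful to account for both $K_i(y)=\beta_iy+t_ixz$ contributions in the Leibniz-type expansion and to check compatibility with Lemma \ref{lem3-1}. Once all coefficient equations are written out, the eighteen cases follow by direct inspection, completing the proof.
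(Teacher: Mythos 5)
Your proposal follows essentially the same route as the paper: constrain the shape of $\left(M_{EF}\right)_{1}$ via the $K_{i}$-weight relations (noting that $y$ is no longer an eigenvector, so only $E(y)$, $F(y)$ may carry $x$-, $y$- and $z$-terms while $E(x),F(x),E(z),F(z)$ are weight vectors), then project (\ref{eqn3-7})--(\ref{eqn3-12}) onto $\Q_{2}$ and split into cases according to which of $a_{0},c_{0},a'_{0},c'_{0}$ vanishes, with the $xz$-coupling through $t_{1},t_{2}$ fixing $b_{3}$ (resp.\ $b_{1}$, $b'_{3}$, $b'_{1}$) exactly as in the four displayed systems. This matches the paper's argument, including the key observation that the term $K_{2}K_{1}^{-1}(y)E(x)$ contributes the $a_{0}t_{i}\beta_{1}^{-1}xz$ coupling.
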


\subsection{The structures of  $X_{q}(A_{1})$-module algebra  on $\Q$  }\label{sect-3}
Through the previous discussion, 
we found that both the $0$-st homogeneous component $\left(M_{EF}\right)_{0}$ 
and the $1$-st homogeneous component $\left(M_{EF}\right)_{1}$ determine the eigenvalues of  $x$ and $z$. 
By lemma \ref{lem3-2} and lemma \ref{lem3-3}, and $q$ is not a root of the unit, 
it follows that there are 91 kinds of $\left[\left(M_{EF}\right)_{0},\left(M_{EF}\right)_{1}\right]$ are empty.
Hence, we only discuss the following cases.

\begin{lem}\label{lem3-4} If  the $0$-th homogeneous component of $M_{EF}$ is zero and the $1$-st homogeneous component of $M_{EF}$ is nonzero, then these  series are empty. 
\end{lem}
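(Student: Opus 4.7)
The plan is to argue case-by-case in the same spirit as Lemma \ref{lem2-3}, assuming for contradiction that one of the twelve nonzero $(M_{EF})_1$-configurations listed in Lemma \ref{lem3-3} occurs together with $(M_{EF})_0=0$, and derive a contradiction from the commutator relation (\ref{eqn1-6}). For a representative case, say
$\left(M_{EF}\right)_1=\left(\begin{smallmatrix} a_1y & 0 & 0\\ 0 & 0 & 0\end{smallmatrix}\right)_1$ with $a_1\neq 0$,
the weight constraints in Lemma \ref{lem3-3} force $\beta_1=q^{-1}\alpha_1$, $\beta_2=-q^{-1}\alpha_2$, $\beta_1^{-1}\beta_2=q$, $\gamma_1^{-1}\gamma_2=1$; combined they imply $\alpha_2\alpha_1^{-1}=-q$. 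Thus the right-hand side of (\ref{eqn1-6}) applied to $x$ is
\[
\frac{K_2K_1^{-1}-K_2^{-1}K_1}{q-q^{-1}}(x)=\frac{\alpha_2\alpha_1^{-1}-\alpha_1\alpha_2^{-1}}{q-q^{-1}}\,x=-x,
\]
which is a nonzero element of $\Q_1$.

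The key structural observation for the left-hand side is that, since $(M_{EF})_0=0$, both $E$ and $F$ send each $\Q_k$ into $\bigoplus_{j\geq k}\Q_j$: indeed, by the Hopf action rule together with $\Delta(E)=E\otm 1+K_2K_1^{-1}\otm E$ and $\Delta(F)=1\otm F+F\otm K_2^{-1}K_1$, applying $E$ (or $F$) to a monomial of degree $k$ produces a sum in which each term contains a factor $E(x)$, $E(y)$, $E(z)$ (resp.\ with $F$), and each of these factors lies in $\bigoplus_{j\geq 1}\Q_j$. Consequently, the degree-$1$ projection of $(EF-FE)(x)$ is determined entirely by $(M_{EF})_1$: the higher homogeneous parts of $E(x)$ and $F(x)$ contribute only to degrees $\geq 2$, and the linear parts themselves are annihilated by the opposite operator because, in this case, $F(x)\big|_1=0$ and $F(y)\big|_1=0$. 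Hence $(EF-FE)(x)\big|_1=0$, contradicting $-x$.

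The remaining cases proceed identically: the case-specific form of $(M_{EF})_1$ either (i) makes $E(F(x))\big|_1$ and $F(E(x))\big|_1$ both vanish when the relevant nonzero RHS is $\frac{\alpha_2\alpha_1^{-1}-\alpha_1\alpha_2^{-1}}{q-q^{-1}}x$, or (ii) makes the analogous computation vanish for $z$ when the constraints force $\gamma_1\neq \pm\gamma_2$, or (iii) for $y$ when $\beta_1\neq\pm\beta_2$ (here one projects to $\Q_1$ and notes that the new $xz$-contribution coming from $K_i(y)$ lives in $\Q_2$, so the $y$-coefficient of the RHS is still $\frac{\beta_2\beta_1^{-1}-\beta_1\beta_2^{-1}}{q-q^{-1}}\neq 0$). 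In every case, at least one of $\alpha_2\alpha_1^{-1}\neq\alpha_1\alpha_2^{-1}$, $\beta_2\beta_1^{-1}\neq\beta_1\beta_2^{-1}$, or $\gamma_2\gamma_1^{-1}\neq\gamma_1\gamma_2^{-1}$ holds (because $q$ is not a root of unity and the constraints in Lemma \ref{lem3-3} force one of the ratios to a power of $\pm q$), so a nonzero degree-$1$ contribution to the RHS is unavoidable, while the LHS vanishes at degree $1$. The main obstacle to watch for is the $y$-case, where $K_i(y)$ has an $xz$-tail, so one must verify that the $xz$-pieces on both sides match at degree $2$ without spoiling the degree-$1$ clash; this reduces to the fact that the degree-$2$ part of $(EF-FE)(y)$ is built solely from $(M_{EF})_1$-entries and the extra $t_i$-terms, and the degree-$1$ obstruction already yields the contradiction independently of degree $2$.
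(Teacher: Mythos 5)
Your proposal is correct and follows essentially the same route as the paper: the paper's proof of this lemma simply defers to Lemma \ref{lem2-3}, whose argument is exactly your representative-case computation (the weight constraints force a ratio such as $\alpha_2\alpha_1^{-1}=-q$, making the right-hand side of (\ref{eqn1-6}) a nonzero multiple of $x$ in $\Q_1$, while the vanishing of $(M_{EF})_0$ forces $E$ and $F$ to be degree-nondecreasing, so $(EF-FE)(x)$ has zero projection onto $\Q_1$). Your additional care with the $xz$-tail of $K_i(y)$ in the $t\neq 0$ setting is a detail the paper leaves implicit, but it does not change the structure of the argument.
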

\begin{proof} The  proof is similar to the proof of Lemma \ref{lem2-3}.
\end{proof}

\begin{lem}\label{lem3-5} If  the $0$-th homogeneous component of $M_{EF}$ is nonzero and the $1$-st homogeneous component of $M_{EF}$ is zero, then these  series are empty. 
\end{lem}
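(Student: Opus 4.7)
The plan is to mirror the strategy of Lemma \ref{lem2-4}, adapted to the richer automorphism $K_i(y) = \beta_i y + t_i xz$. By the case listing in Lemma \ref{lem3-2}, the hypothesis $(M_{EF})_0 \neq 0$ splits into seven subcases. I would treat one representative subcase in detail (say $a_0 \neq 0$, forcing $\alpha_1=q$, $\alpha_2=-q$, $\beta_1^{-1}\beta_2=q$, $\gamma_1^{-1}\gamma_2=q$) and note that the remaining six (the single-entry $b_0$, $c_0$, $a'_0$, $b'_0$, $c'_0$ cases and the mixed $(b_0,b'_0)$ case) are handled identically after obvious relabeling. Observe also that requiring $(M_{EF})_1=0$ together with the constraints listed at the end of Lemma \ref{lem3-3} forces $b_3 = -a_0(t_2+qt_1)/(2q\beta_1) = 0$, hence $t_2 = -qt_1$, a relation to be used later.

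The first step is to pin down each of $E(x), E(y), E(z), F(x), F(y), F(z)$ as precisely as possible. Writing each as a formal sum $\sum \rho_{mnl}\,x^my^nz^l$ with total degree $\geq 2$ (apart from the known constant $a_0$ in $E(x)$), I would apply the commutation rules (\ref{eqn1-2})--(\ref{eqn1-5}) monomial-by-monomial. For each term to survive, both $\alpha_1^m\beta_1^n\gamma_1^l = q^{\pm 1}$ and $(-1)^{m+n+l}\alpha_2^m\beta_2^n\gamma_2^l = \pm q^{\pm 1}$ must hold simultaneously (with the $t_i$-cross-terms from $K_i(y)$ producing auxiliary equations that themselves must vanish coefficient by coefficient). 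Since $q$ is not a root of unity, as in Lemma \ref{lem2-4} these systems have only a tiny handful of solutions; I expect to conclude $E(y) = E(z) = 0$ and that each of $F(x), F(y), F(z)$ is either $0$ or a single explicit monomial (up to scalar), with the scalar then determined by the next step.

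The second step is to apply the Hopf-algebra identity (\ref{eqn1-6}) to $x$, $y$, $z$. Using $\alpha_2\alpha_1^{-1}=-1$ I compute $(EF-FE)(x) = -x$, and analogously $(EF-FE)(y)$, $(EF-FE)(z)$, being careful that $K_i^{\pm 1}(y)$ contributes an $xz$-term. Comparing these against $EF(w) - FE(w)$ evaluated from the explicit forms of $E$ and $F$ obtained in step one, one matches monomials degree by degree; this should either force each surviving $F(x), F(y), F(z)$ to take a uniquely determined value or produce an immediate contradiction with $-x, \pm y, \pm z$ on the right-hand side.

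The final step is to invoke $E^2 = F^2 = 0$. Following the template of Lemma \ref{lem2-4}, I would compute $F^2(y)$ (and if needed $F^2(z)$) using the unique $F(x), F(y), F(z)$ coming from step two, along with the $t\neq 0$ term $t_i xz$ in $K_i(y)$; the outcome should be a nonzero polynomial, contradicting $F^2 = 0$. The main obstacle is the bookkeeping: with $t \neq 0$ the operators $K_i$ no longer act diagonally on $y$, so several relations that were automatic in Section \ref{sect-2} now require verifying that the $xz$-cross-terms also cancel, and the reduction $t_2 = -qt_1$ obtained from $b_3 = 0$ must be fed in carefully. Once the representative case is closed, the other six subcases follow by the same three-step pattern, completing the proof.
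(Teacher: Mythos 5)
Your proposal is correct and follows essentially the same route as the paper, whose proof of this lemma is simply a reference to the argument of Lemma \ref{lem2-4} (pin down $E$, $F$ on the generators via the $K_i$-commutation relations and the non-root-of-unity hypothesis, then derive a contradiction from relation (\ref{eqn1-6}) and $E^{2}=F^{2}=0$), adapted to the $xz$-cross-terms exactly as you describe. One remark: in the subcases $a_0\neq0$, $c_0\neq0$, $a'_0\neq0$, $c'_0\neq0$, Lemma \ref{lem3-1} forces $t_i=(\beta_i-\alpha_i\gamma_i)t$, which turns the coefficient $b_3=-a_0(t_2+qt_1)/(2q\beta_1)$ into $-a_0t\neq0$, so $(M_{EF})_1$ cannot vanish there at all and those subcases are empty for a more immediate reason than your three-step argument.
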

\begin{proof}The  proof is similar to the proof of Lemma \ref{lem2-4}.
\end{proof}

\begin{lem}\label{lem3-6} The  {\small$\left[\left(
    \begin{array}{ccccccccc}
      a_0 & 0&0\\
      0     &0&0
    \end{array}
  \right)_{0},\left(
    \begin{array}{ccccccccc}
      0 & \frac{-a_0(t_2+qt_1)}{2q\beta_1}z&0\\
      0 & 0&0
    \end{array}
  \right)_{1}\right]$}
-series is empty.
\end{lem}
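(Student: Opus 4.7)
The plan is to follow the strategy of Lemma \ref{lem2-4}: assume for contradiction that the series is nonempty, so $b_3=-a_0(t_2+qt_1)/(2q\beta_1)\neq 0$ (otherwise we are in the setting of Lemma \ref{lem3-5}), expand $E(x),E(y),E(z),F(x),F(y),F(z)$ as formal power series in $\Q$, and use the Hopf algebra axioms together with the relation $F^2=0$ and compatibility with the defining relations of $\Q$ to derive a purely arithmetic obstruction on $q$. Since $a_0\neq 0$, Lemma \ref{lem3-3} fixes $\alpha_1=q$, $\alpha_2=-q$, $\beta_2=q\beta_1$, $\gamma_2=q\gamma_1$; a direct calculation gives $K_2K_1^{-1}(x)=-x=K_2^{-1}K_1(x)$, $K_2K_1^{-1}(z)=qz$, $K_2^{-1}K_1(z)=q^{-1}z$, and $\frac{K_2K_1^{-1}-K_2^{-1}K_1}{q-q^{-1}}(y)=y+C\,xz$ for an explicit constant $C$ depending on $t_1,t_2,\beta_1,\gamma_1$.

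First I would pin down $F(x)_2$ and $F(y)_2$. Applying $K_1F=qFK_1$ and $K_2F=-qFK_2$ to $x$ and to $y$, and projecting onto $\Q_2$ (using that $F(xz)$ lies in $\Q_s$ for $s\geq 3$ and so contributes nothing to this projection), the diagonal action of $K_1$ and $K_2$ on the basis $\{x^2,xy,y^2,xz,yz,z^2\}$ of $\Q_2$, combined with the hypothesis that $q$ is not a root of unity, forces $F(x)_2=\lambda' x^2$ and $F(y)_2=\mu\, xy+\sigma\, xz$ for some $\lambda',\mu,\sigma\in\C$. To determine $\mu$ and $\sigma$, I would extract the degree-$1$ part of $(EF-FE)(y)=y+Cxz$: since $E(y)$ starts in degree $1$ and every homogeneous piece of $F$ lands in degree $\geq 2$, the degree-$1$ part of $F(E(y))$ vanishes, while the identities $E(xy)=a_0y-b_3xz+(\mathrm{higher})$ and $E(xz)=a_0z+(\mathrm{higher})$ yield $E(F(y)_2)_1=\mu a_0y+\sigma a_0z$; matching with $y$ gives $\mu=a_0^{-1}$ and $\sigma=0$, so $F(y)_2=a_0^{-1}xy$.

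The final step is to apply $F$ to the defining relation $yx-qxy=0$ via $\Delta(F)=1\otimes F+F\otimes K_2^{-1}K_1$; using $yx^2=q^2x^2y$ and $(xy)x=qx^2y$ one computes the coefficient of $x^2y$ in degree $3$ to be $(q^2-1)\lambda'-2qa_0^{-1}$, which must vanish and so forces $\lambda'=2qa_0^{-1}/(q^2-1)$. Expanding $F^2(y)=a_0^{-1}F(xy)+F(F(y)_3)+\cdots$, the degree-$3$ coefficient of $x^2y$ is $a_0^{-1}(a_0^{-1}+q^{-1}\lambda')=a_0^{-2}(q^2+1)/(q^2-1)$; the relation $F^2=0$ forces this to vanish, yielding $q^2=-1$, which contradicts the hypothesis that $q$ is not a root of unity. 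The main obstacle will be the careful degree-by-degree bookkeeping in the presence of the off-diagonal pieces of $K_i(y)$, but since the contradictory coefficient sits on the monomial $x^2y$, the parameters $t_1,t_2,\gamma_1$ drop out of the final arithmetic, leaving only $q$ and producing the numerical obstruction.
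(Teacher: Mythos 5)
Your argument is correct and follows essentially the same strategy as the paper's proof: assume the series is nonempty, use the $K_i$-weight constraints together with the defining relations of $\C_{q}[x,y,z]$ and the relation $EF-FE=\frac{K_{2}K_{1}^{-1}-K_{2}^{-1}K_{1}}{q-q^{-1}}$ to pin down the low-degree part of the $F$-action, and then contradict $F^{2}=0$ via the nonvanishing factor $\frac{q^{2}+1}{q^{2}-1}$. The only difference is bookkeeping: the paper determines $E(y)$, $E(z)$, $F(x)$, $F(z)$ in closed form and derives the contradiction from $F^{2}(z)\neq 0$, whereas you work only with the degree-$2$ projections $F(x)_{2}$, $F(y)_{2}$ and contradict $F^{2}(y)_{3}=0$ on the monomial $x^{2}y$; the final arithmetic obstruction $q^{2}=-1$ is identical.
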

\begin{proof}By (\ref{eqn3-13}), one has $\alpha_{1}=q,    \alpha_{2}=-q, \beta_{1}^{-1}\beta_{2}=q, \gamma_{1}^{-1}\gamma_{2}=q$, 
and 
$$\alpha_{1}^{-1}\alpha_{1}\gamma_{1}^{-1}\gamma_{2}=-q\neq q=\beta_{1}^{-1}\beta_{2}.$$
From Lemma \ref{lem3-1}, it can be concluded that $t_i=\left(\beta_{i}-\alpha_{i}\gamma_{i}\right)t~ (i=1,2)$, where $t\in \mathbb{C}^{*}$, and $\frac{-a_0(t_2+qt_1)}{2q\beta_1}z=-a_0tz$.

If we suppose this series is not empty, we have $K_{1}(x)=qx, K_{2}(x)=-qx,$ and $wt_{K_i}(E(x))=(-1)^{i-1}q^{-1}\alpha_{i}=1$, hence $E(x)=a_0$.
Set 
 \begin{eqnarray*}
\begin{array}{lllllll}
&K_{1}(y)=\beta_{1}y+(\beta_{1}-q\gamma_1)txz,\; & K_{2}(y)=\beta_{2}y+(\beta_{2}+q\gamma_2)txz,\; \\
&K_{1}(z)=\gamma_{1}z,\; & K_{2}(z)=\gamma_{2}z,\; \\
&E(y)=-a_0tz+\sum\limits_{m_{2}+n_{2}+l_{2}\geq2}\psi^{2}_{m_{2}n_{2}l_{2}}x^{m_{2}}y^{n_{2}}z^{l_{2}}\;
&\mathrm{for}~~{m_{2},n_{2},l_{2}}\in \mathbb{N},\\
&E(z)=\sum\limits_{m_{3}+l_{3}\geq2}\psi^{3}_{m_{3}l_{3}}x^{m_{3}}z^{l_{3}}\;
&\mathrm{for}~~{m_{3},l_{3}}\in \mathbb{N},\\
&F(x)=\sum\limits_{m_{4}+l_{4}\geq2}\psi^{4}_{m_{4}l_{4}}x^{m_{4}}z^{l_{4}}\;
&\mathrm{for}~~{m_{4},l_{4}}\in \mathbb{N},\\
&F(y)=\sum\limits_{m_{5}+n_{5}+l_{5}\geq2}\psi^{5}_{m_{5}n_{5}l_{5}}x^{m_{5}}y^{n_{5}}z^{l_{5}}\;
&\mathrm{for}~~{m_{5},n_{5},l_{5}}\in \mathbb{N},\\
&F(z)=\sum\limits_{m_{6}+l_{6}\geq2}\psi^{6}_{m_{6}l_{6}}x^{m_{6}}z^{l_{6}}\;
&\mathrm{for}~~{m_{6},l_{6}}\in \mathbb{N},
\end{array}
\end{eqnarray*}
where  $\beta_{1},\beta_{2},\gamma_{1},\gamma_{2}, t\in \mathbb{C}^{\ast}$, and $\psi^{2}_{m_{2}n_{2}l_{2}}, \psi^{3}_{m_{3}l_{3}},\psi^{4}_{m_{4}l_{4}},\psi^{5}_{m_{5}n_{5}l_{5}},\psi^{6}_{m_{6}l_{6}}\in \C.$
According to (\ref{eqn3-7}), it can be obtained
$$\sum\limits_{m_{2}+n_{2}+l_{2}\geq2}\psi^{2}_{m_{2}n_{2}l_{2}}(q^{n_2+l_2}+q)x^{m_{2}+1}y^{n_{2}}z^{l_{2}}=0,$$
then for all $m_{2},n_{2},l_{2}\in \mathbb{N}$ with $m_{2}+n_{2}+l_{2}\geq2$, one has $\psi^{2}_{m_{2}n_{2}l_{2}}=0$ and 
$E(y)=-a_0tz.$
Similarly, we can get $E(z)=0$.
By (\ref{eqn1-3}) and (\ref{eqn1-5}), we have
{\small\begin{align*}
(K_{1}F-qFK_{1})(x)=&K_{1}(F(x))-qF(K_{1}(x))\\
=&K_{1}(\sum\limits_{m_{4}+l_{4}\geq2}\psi^{4}_{m_{4}l_{4}}x^{m_{4}}z^{l_{4}})-q^{2}F(x)\\
=&\sum\limits_{m_{4}+l_{4}\geq2}\psi^{4}_{m_{4}l_{4}}(q^{m_4}\gamma_{1}^{l_4}-q^2)x^{m_{4}}z^{l_{4}}=0,\\
(K_{2}F+qFK_{2})(x)=&K_{2}(F(x))+qF(K_{2}(x))\\
=&\sum\limits_{m_{4}+l_{4}\geq2}\psi^{4}_{m_{4}l_{4}}[(-q)^{m_4}\gamma_{2}^{l_4}-q^2]x^{m_{4}}z^{l_{4}}=0,
\end{align*}}
then for all $m_{4},l_{4}\in \mathbb{N}$ with $m_{4}+l_{4}\geq2$, one has
$$\psi^{4}_{m_{4}l_{4}}=0~~\mathrm{or} ~~(-q)^{m_{4}}\gamma_{2}^{l_{4}}=q^2,$$
and $F(x)=0$ or $F(x)=\psi^{4}_{20}x^{2}$.
\begin{itemize}
  \item If $F(x)=0$, it is easy to get $F(y)=F(z)=0$, then 
  $$(EF-FE)(z)=0\neq \frac{K_{2}K_{1}^{-1}-K_{2}^{-1}K_{1}}{q-q^{-1}}(z)=z,$$
this contradicts our hypothesis.
  \item If $F(x)=\psi^{4}_{20}x^{2}$, by (\ref{eqn3-12}), one can get $F(z)=\frac{\psi^{4}_{20}(q^2-1)}{2q}xz$,
  and 
   $$(EF-FE)(z)=\frac{K_{2}K_{1}^{-1}-K_{2}^{-1}K_{1}}{q-q^{-1}}(z)=z,$$
after calculation, we can conclude that $F(x)=\frac{2qa_0}{q^2-1}x^2$ and
$F(z)=a_0xz$. However
$$F^2(z)=F(a_0xz)=a_{0}^{2}\frac{q^2+1}{q^2-1}x^2z\neq 0,$$
this contradicts our hypothesis.
\end{itemize}

In summary, this series is empty.
\end{proof}

Similar to Lemma 3.6, we can obtain

 {\small$\left[\left(
    \begin{array}{ccccccccc}
      0 & 0&c_0\\
      0     &0&0
    \end{array}
  \right)_{0},\left(
    \begin{array}{ccccccccc}
      0 & \frac{-c_0(t_1+qt_2)}{2q\beta_1}x&0\\
      0 & 0&0
    \end{array}
  \right)_{1}\right],\left[\left(
    \begin{array}{ccccccccc}
      0& 0&0\\
      a'_0     &0&0
    \end{array}
  \right)_{0},\left(
    \begin{array}{ccccccccc}
      0 & 0&0\\
      0 & \frac{a'_0(t_2-qt_1)}{2q\alpha_2\gamma_2}z&0
    \end{array}
  \right)_{1}\right],$} 
  
  {\small$\left[\left(
    \begin{array}{ccccccccc}
      0 & 0&0\\
      C'_0 &0&0
    \end{array}
  \right)_{0},\left(
    \begin{array}{ccccccccc}
      0 & 0z&0\\
      0 & \frac{c'_0(qt_2-t_1)}{2q\alpha_2\gamma_2}z&0
    \end{array}
  \right)_{1}\right]$} are empty series.
  
Next we turn to "nonempty" series, it only has one "nonempty" series.

\begin{thm}\label{thm3-7} The {\small$\left[\left(
    \begin{array}{ccccccccc}
     0 & 0&0\\
      0 & 0&0
    \end{array}
  \right)_{0},\left(
    \begin{array}{ccccccccc}
      0 & 0&0\\
      0 & 0&0
    \end{array}
  \right)_{1}\right]$}
-series has two types of  $X_{q}(A_1)$-module algebra structures  on the $\Q$ given by
\begin{enumerate}
  \item for all $\lambda, \mu, t\in \C^{*}$, we have
\begin{eqnarray}\label{eqn3-49}
\begin{array}{llll}
K(x)=\lambda x, &K_2(x)=\pm\lambda x\\
K(y)=\lambda\mu y+txz, &K_2(x)=\pm(\lambda\mu y+txz)\\
K(z)=\mu z, &K_2(z)=\pm\mu z\\
E(x)=E(y)=E(z)=0,&F(x)=F(y)=F(z)=0,
\end{array}
\end{eqnarray}
they are pairwise nonisomorphic.  
  \item for all $\lambda,\sigma, \mu, \widetilde{t}\in \C^{*}$, we have
\begin{eqnarray}\label{eqn3-50}
\begin{array}{llll}
K(x)=\lambda x, &K_2(x)=\pm\lambda x\\
K(y)=\sigma y+\widetilde{t}xz, &K_2(x)=\pm(\sigma y+\widetilde{t}xz)\\
K(z)=\mu z, &K_2(z)=\pm\mu z\\
E(x)=E(y)=E(z)=0,&F(x)=F(y)=F(z)=0,
\end{array}
\end{eqnarray}
where $\widetilde{t}=(\lambda\mu-\sigma)t\in \C^{*}$, they are pairwise nonisomorphic.
\end{enumerate}
\end{thm}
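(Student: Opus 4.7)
The plan is to mirror the argument for Theorem~\ref{thm2-5}, but with the additional $xz$-contribution in $K_i(y)$ governed by Lemma~\ref{lem3-1}. First I verify directly that the formulas in (\ref{eqn3-49}) and (\ref{eqn3-50}) determine well-defined $\X$-module algebra structures on $\Q$. Since $E$ and $F$ act as zero, the module-algebra compatibility on products of generators is automatic, so only three checks remain: that $K_1$ and $K_2$ are genuine algebra automorphisms of $\Q$ (immediate from the description of $\mathrm{Aut}(\Q)$ together with Lemma~\ref{lem3-1}), that $K_1$ and $K_2$ commute (which imposes a sign constraint among the $\pm$ choices on $K_2(x), K_2(y), K_2(z)$), and that the relation (\ref{eqn1-6}) is satisfied on each generator, which after substituting $E=F=0$ reduces to the vanishing of $\frac{K_2K_1^{-1}-K_2^{-1}K_1}{q-q^{-1}}$ on $x, y, z$; these vanishings follow from $\alpha_1^2=\alpha_2^2$, $\gamma_1^2=\gamma_2^2$ and the $y$-compatibility that the two parametrizations precisely encode.

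To show no other structures arise in this series, I argue that $E$ and $F$ must vanish on generators. By hypothesis $(M_{EF})_0$ and $(M_{EF})_1$ are both zero, so each $E(v), F(v)$ for $v \in \{x,y,z\}$ is a polynomial whose lowest-degree contribution has degree $\geq 2$; consequently $(EF-FE)(v)$ has no degree-$1$ component. Projecting (\ref{eqn1-6}) to $\Q_1$ therefore gives $\frac{K_2K_1^{-1}-K_2^{-1}K_1}{q-q^{-1}}(v) = 0$ for each generator. For $v=x$ and $v=z$ this forces $\alpha_1^2=\alpha_2^2$ and $\gamma_1^2=\gamma_2^2$. For $v=y$, Lemma~\ref{lem3-1} produces the dichotomy $\beta_i=\alpha_i\gamma_i$ or $t_i=(\beta_i-\alpha_i\gamma_i)t$; computing $K_2K_1^{-1}(y) - K_2^{-1}K_1(y)$ explicitly in each subcase and matching both the $y$- and $xz$-components against zero yields, after renaming $\alpha_1=\lambda$, $\gamma_1=\mu$, $\beta_1=\sigma$, exactly the two parametrizations in (\ref{eqn3-49}) and (\ref{eqn3-50}).

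With the $K_i$ pinned down, I then establish $E(v)=F(v)=0$ along the lines of Lemma~\ref{lem2-4}: writing $E(v), F(v)$ as power series in $x,y,z$ with unknown coefficients of total degree $\geq 2$ and applying the commutation relations (\ref{eqn1-2})--(\ref{eqn1-5}), each monomial $x^m y^n z^l$ appearing with nonzero coefficient must satisfy simultaneously a pair of equations of the form $q^m\beta_1^n\gamma_1^l=1$ and $(-q)^m\beta_2^n\gamma_2^l=1$ (with analogues for $F$). Substituting the already-determined parameter values and using that $q$ is not a root of unity eliminates every monomial of degree $\geq 2$. The isomorphism claim is then straightforward: any $\Psi\in\mathrm{Aut}(\Q)$ conjugating one such structure to another must commute with $K_1$ and $K_2$, and this forces preservation of the eigenvalues $\lambda, \mu, \sigma$ and of the signs distinguishing $K_2$ from $K_1$, so distinct parameter tuples give nonisomorphic structures.

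The main obstacle I anticipate is the $y$-analysis in the necessity direction: unlike for $x$ and $z$, the operator $\frac{K_2K_1^{-1}-K_2^{-1}K_1}{q-q^{-1}}$ applied to $y$ carries both a $y$-contribution and an $xz$-contribution, and one must use Lemma~\ref{lem3-1} to disentangle them. This is precisely where the bifurcation into cases~(1) and (2) is forced, and where the somewhat surprising relation $\widetilde{t}=(\lambda\mu-\sigma)t$ in case~(2) appears naturally.
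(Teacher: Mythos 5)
Your proposal is correct and follows essentially the same route as the paper: the paper's own proof of Theorem~\ref{thm3-7} is simply a reference to the argument of Theorem~\ref{thm2-5} (project the relation (\ref{eqn1-6}) to $\Q_{1}$ to pin down the $K_i$, kill the higher-degree terms of $E$ and $F$ by the weight argument of Lemma~\ref{lem2-4}, and distinguish structures via commutation with $K_1,K_2$), which is exactly the skeleton you describe. Your additional care with the $xz$-component of $K_i(y)$ via Lemma~\ref{lem3-1}, yielding the bifurcation into cases (1) and (2) and the relation $\widetilde{t}=(\lambda\mu-\sigma)t$, is precisely the detail the paper leaves implicit.
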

\begin{proof}
The  proof is similar to the proof of Theorm \ref{thm2-5}.
\end{proof}


\begin{thebibliography}{lll}
\bibitem{Agha}\label{Agha}A. Aghamohammadi, V. Karimipour, S. Rouhani. The multiparametric non-standard deformation of $A_{n-1}$. J. Phys. A: Math. Gen. 1993, 26: L75-L82.

\bibitem{ALCH}\label{ALCH}J. Alev, M. Chamarie. D$\acute{\mathrm{e}}$rivations et automorphismes de quelques alg$\grave{\mathrm{e}}$bres quantiques. Comm. Algebra 1992, 20(6): 1787-1802.

\bibitem{ART1}\label{ART1}V. A. Artamonov. Quantum polynomial algebras. J. Math. Sci. 1997, 87(3): 3441-3462.

\bibitem{ART2}\label{ART2}V. A. Artamonov. Actions of pointlike Hopf algebras on quantum polynomials. Russian Math. Surveys 2000, 55: 1137-1138.

\bibitem{ARWI}\label{ARWI}V. A. Artamonov, R. Wisbauer. Homological properties of quantum polynomials. Algebr. Represent. Theory 2001, 4(3): 219-247.
    
\bibitem{BEATTIE1}\label{BEATTIE1} 
M. Beattie. A direct sum decomposition for the Brauer group of $H$-module algebras. J. Algebra 1976, 43(2): 686-693.    
    
\bibitem{BEATTIE2}\label{BEATTIE2} 
M. Beattie. Brauer groups of $H$-module and $H$-dimodule algebras. Ph.D. thesis, Queen's University (Canada) ProQuest LLC, Ann Arbor, MI, 1977.

\bibitem{BM}\label{BM}     
R. J. Blattner,  S. Montgomery. A duality theorem for Hopf module algebras. J. Algebra  1985, 95(1): 153-172.

\bibitem{CW}\label{CW}     
L. Castellani,  J. Wess.  Quantum groups and their applications in physics. IOS Press, Amsterdam, 1996.

\bibitem{CWWZ}\label{CWWZ} 
K. Chan, C. Walton Y. Wang, J. Zhang. Hopf actions on filtered regular algebras. J. Algebra 2014, 397: 68-90.

\bibitem{CY}\label{CY}
C. Cheng, S. Yang. Weak Hopf algebras corresponding to non-standard quantum groups. Bull. Korean Math. Soc. 2017, 54(2): 463-484.

\bibitem{DHL}\label{DHL}     
S. Duplij, Y. Hong, F. Li.
$U_{q}(sl(m+1))$-module algebra structures on the coordinate algebra of a quantum vector space. J. Lie Theory 2015 25(2): 327-361.

\bibitem{DS1}\label{DS1}      
 S. Duplij, S. Sinel'shchikov.
Classification of $U_{q}(sl_{2})$-module algebra structures on the quantum plane. J. Math. Phys. Anal. Geom. 2010, 6(4): 406-430, 436, 439.


\bibitem{DS2}\label{DS2}      
S. Duplij, S. Sinel'shchikov. 
On $U_{q}(sl_{2})$-actions on the quantum plane. Acta Polytechnica 2010, 50: 25-29.

\bibitem{DVZ}\label{DVZ}      
B. Drabant, A. Van Daele, Y. Zhang.
Actions of multiplier Hopf algebras. Comm. Algebra 1999, 27(9): 4117-4172.  

\bibitem{Ge}\label{Ge} M. Ge, G. Liu, K. Xue. New solutions of Yang-Baxter equations: Birman-Wenzl algebra and quantum group structures. J. Phys. A: Math. Gen. 1991, 24: 2679-2690.
    
\bibitem{HU}\label{HU}     
N. Hu.  Quantum divided power algebra, $q$-derivatives, and some new quantum groups. J. Algebra 2000, 232(2): 507-540.

\bibitem{N}\label{N} N. Jing, M. Ge, Y. Wu. A New Quantum Group Associated with a Non-standard Braid Group Representation. Lett. Math. Phys. 1991, 21: 193-203.

\bibitem{KASSEL}\label{KASSEL} C. Kassel.  Quantum groups. Grad. Texts in Math., 155 Springer-Verlag, New York, 1995. 
    
\bibitem{KS}\label{KS}     
A. Klimyk, K. Schmüdgen.  Quantum groups and their representations. Texts Monogr. Phys. Springer-Verlag, Berlin, 1997. 

\bibitem{MONTGOMERY}\label{MONTGOMERY} S. Montgomery. Hopf Algebras and their actions on rings.  
CBMS Regional Conf. Ser. in Math., 82
Published for the Conference Board of the Mathematical Sciences, Washington, DC; by theAmerican Mathematical Society, Providence, RI, 1993. 

\bibitem{SY1}\label{SY1} D. Su, S. Yang. 
Representations of the small nonstandard quantum groups $\overline{X}_{q}(A_1)$. Comm. Algebra 2019, 47(12): 5039-5062. 

\bibitem{SW}\label{SW} M. E. Sweedler. Hopf Algebras.
Math. Lecture Note Ser.
W. A. Benjamin, Inc., New York, 1969.

\bibitem{ZML}\label{ZML} M. Zhu. Classification of quantum group $A_n$-module algebra structures on quantum polynomial algebras. Yangzhou University, 2012.


\end{thebibliography}
\end{document}